\newtheorem{thm}{Theorem}[section]
\newtheorem{lem}[thm]{Lemma}
\newtheorem{clm}[thm]{Claim}
\newtheorem{prop}[thm]{Proposition}
\theoremstyle{remark}
\newtheorem{rmk}[thm]{Remark}
\theoremstyle{definition}
\newtheorem{Def}[thm]{Definition}                                        %
\def \C {\mathbb C}
\def \R {\mathbb R}
\title{ $C^{2,\alpha}$-estimate for     Monge-Ampere equations with H\"older-continuous right hand side}
\author{Xiuxiong Chen,\ Yuanqi Wang}
\date{}
\begin{document}

\maketitle{}
\begin{abstract}
We present a somewhat new proof  to the $C^{2,\alpha}$-aprori estimate  for the uniform  elliptic Monge-Ampere equations, in both the real and complex settings. Our estimates do not need to differentiate the equation, and    only depends on the $C^{\alpha^{\prime}}-$norm of the right hand side of the equation, $0<\alpha<\alpha^{\prime}$.  
\end{abstract}

\section{Introduction}
Given an uniformly elliptic Monge-Ampere equation, historically,  there are various methods to obtain higher order estimates.
One of the pioneering work is the celebrated third derivatives by E. Calabi \cite{Calabi58}, where he requires the solution is of class $C^5.\;$
In 1980s,  from a complete different point of view,   Evans-Krylov-Safonov \cite{Evans1},\cite{Evans2},\cite{Krylov}   gave the famous Schauder estimate.  While their method requires
less regularity,  their estimates  still  rely on differentiating both sides of the equation. Thus,  the estimates they give depend on $W^{1,p}-$norms (for $p$ big) or higher derivative norms of the right-hand side of the  equation. 

 Later, Safonov \cite{Safonov} and  Caffarelli \cite{Caff}  discovered the following  celebrated    
  $C^{2,\alpha}-$estimate without  differentiating the Monge-Ampere equations. 
\begin{thm}\label{thm of Caffarelli}   Suppose $B$ is a unit ball in $\mathbb{R}^n$, and
$u\in C^{2,\alpha^{\prime}}(\bar{B})$ is a convex   function.  Suppose
\begin{equation}\label{equ real MA}
\det u_{ij} =  e^f > 0,\ \textrm{where}\ f \in C^{\alpha^{\prime}} (\bar{B}).
\end{equation}
 Then, for any $\alpha \in (0,\alpha^{\prime}),$  we have
\[
[\nabla^{2}u]_{\alpha,{B\over 4}} \leq C,
\] 
where $C$  depends on the $C^{0}(\bar{B})-$norm of    $\nabla^{2}u$,  the $C^{\alpha^{\prime}}(\bar{B})-$norm of $f$, the dimension $n$, the $\alpha^{\prime}$, and $\alpha$. 
\end{thm}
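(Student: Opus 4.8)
The plan is to run the Safonov--Caffarelli approximation scheme and then invoke a Campanato-type characterization of Hölder continuity. I will produce a radius $r_{0}>0$ and a constant $C$, depending only on $n$, $\|\nabla^{2}u\|_{C^{0}(\bar B)}$, $\|f\|_{C^{\alpha'}(\bar B)}$, $\alpha$ and $\alpha'$, such that
\[
\|u-T_{x_{0}}u\|_{L^{\infty}(B_{r}(x_{0}))}\le C\,r^{2+\alpha}\qquad\text{for all }x_{0}\in B_{1/2},\ r\le r_{0},
\]
where $T_{x_{0}}u$ is the second-order Taylor polynomial of $u$ at $x_{0}$. Once this holds, comparing $T_{x_{0}}u$ with $T_{x_{1}}u$ on the ball $B_{2|x_{0}-x_{1}|}(x_{1})$ and reading off coefficients (the range $|x_{0}-x_{1}|\gtrsim r_{0}$ being covered by $\|\nabla^{2}u\|_{C^{0}}$) gives $[\nabla^{2}u]_{\alpha,B_{1/4}}\le C$. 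Observe first that since $\det\nabla^{2}u=e^{f}$ is bounded above and below and $\nabla^{2}u$ is bounded, $u$ is \emph{uniformly elliptic}, $\lambda I\le\nabla^{2}u\le\Lambda I$, with $\lambda,\Lambda$ controlled by the data; all constants below depend only on $n,\lambda,\Lambda,\alpha,\alpha'$. Fix $x_{0}\in B_{1/2}$ and write $c_{0}:=e^{f(x_{0})}$.

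\textbf{The two ingredients.} On a ball $B_{r}(x_{0})$ compare $u$ with the convex solution $w$ of the \emph{constant}-right-hand-side problem $\det\nabla^{2}w=c_{0}$ in $B_{r}(x_{0})$, $w=u$ on $\partial B_{r}(x_{0})$ (existence and uniqueness classical). (i) \emph{Approximation.} Since $\det\nabla^{2}u-\det\nabla^{2}w=a^{ij}\partial_{ij}(u-w)$ with $a^{ij}$ the integrated cofactor matrix of $\nabla^{2}u$ and $\nabla^{2}w$, which is uniformly elliptic, the Aleksandrov--Bakelman--Pucci estimate applied to this linear equation with right-hand side $e^{f}-c_{0}$ (of size $\le C[f]_{\alpha'}r^{\alpha'}$) gives $\|u-w\|_{L^{\infty}(B_{r}(x_{0}))}\le C[f]_{\alpha'}r^{2+\alpha'}$, and interpolation a comparable $C^{1}$ bound. (ii) \emph{Regularity of the model.} The function $w$ solves a smooth, uniformly elliptic, concave equation with \emph{constant} right-hand side, so by classical interior theory (Pogorelov's estimate to carry ellipticity inward, Evans--Krylov, and the Schauder bootstrap of the \emph{differentiated} equation — all for this harmless constant-coefficient equation) $w\in C^{\infty}(B_{r/2}(x_{0}))$ with scale-invariant estimates; its second Taylor polynomial $Q$ at $x_{0}$ then has $\det\nabla^{2}Q=c_{0}$ and $\|w-Q\|_{L^{\infty}(B_{s}(x_{0}))}\le Cr^{-1}s^{3}$ for $s\le r/2$. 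The refinement used to make the scheme iterate: if $u$ is already within $\epsilon$ in $L^{\infty}(B_{r}(x_{0}))$ of an admissible quadratic $\tfrac12(x-x_{0})^{T}A(x-x_{0})$ with $\det A=c_{0}$, then (after rescaling to $B_{1}$) $v_{0}:=w-\tfrac12(x-x_{0})^{T}A(x-x_{0})$ solves the \emph{nonlinear model equation} $\det(A+\nabla^{2}v_{0})=c_{0}$ with $\|v_{0}\|_{L^{\infty}}\le\epsilon$, so for $\epsilon\le\epsilon_{*}=\epsilon_{*}(n,\lambda,\Lambda)$ small the same theory (now for a small solution) gives $\|v_{0}\|_{C^{3}}\le C\epsilon$; its second Taylor polynomial yields a new admissible quadratic with Hessian $A':=A+\nabla^{2}v_{0}(0)$, which again satisfies $\det A'=c_{0}$ \emph{automatically} — because $v_{0}$ solves the full nonlinear model equation, evaluated at the center — with $\|A'-A\|\le C\epsilon$ and error $\le Cr^{-1}\epsilon\,s^{3}$ on $B_{s}(x_{0})$.

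\textbf{Iteration and conclusion.} Normalize $u(x_{0})=0=\nabla u(x_{0})$, choose $\mu\in(0,1)$ small (so $C\mu^{1-\alpha}\le\tfrac12$, hence $C\mu\le\tfrac12\mu^{\alpha}<\epsilon_{*}$) and then $r_{0}$ small. One application of (i)--(ii) at scale $r_{0}$ — taking the first quadratic $P_{1}$ to be the Taylor polynomial of the model solution — initializes the induction at scale $r_{1}=\mu r_{0}$ with small defect $\epsilon_{1}$; subsequent applications of the refined step give quadratics $P_{k}$ with Hessian $A_{k}$, $\det A_{k}=c_{0}$, $\tfrac\lambda2 I\le A_{k}\le2\Lambda I$, and $\|u-P_{k}\|_{L^{\infty}(B_{r_{k}}(x_{0}))}\le\epsilon_{k}r_{k}^{2}$, $r_{k}=\mu^{k}r_{0}$: at each step rescale $u$ from $B_{r_{k}}(x_{0})$ to $B_{1}$ (subtracting only the affine part of $P_{k}$, preserving convexity; the rescaled right-hand side has oscillation $\le\mu^{k\alpha'}r_{0}^{\alpha'}[f]_{\alpha'}=:\gamma_{k}$), apply the refined step, rescale back, obtaining $\epsilon_{k+1}\le\mu^{\alpha}\epsilon_{k}+C\mu^{-2}\gamma_{k}$ together with $\|A_{k+1}-A_{k}\|\le C\epsilon_{k}$ and the lower-order coefficients of $P_{k+1}-P_{k}$ bounded by $C\epsilon_{k}r_{k}$, $C\epsilon_{k}r_{k}^{2}$. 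Since $\alpha<\alpha'$, this recursion forces $\epsilon_{k}\le C\mu^{k\alpha}(\epsilon_{1}+r_{0}^{\alpha'}[f]_{\alpha'})$, which stays below $\epsilon_{*}$; hence $\sum_{k}\|A_{k+1}-A_{k}\|<\infty$, and with the summable lower-order bounds $P_{k}$ converges to a quadratic $P_{\infty}$ with $\|u-P_{\infty}\|_{L^{\infty}(B_{r_{k}}(x_{0}))}\le Cr_{k}^{2+\alpha}$; interpolation between consecutive scales upgrades this to all $r\le r_{0}$. Since $u\in C^{2}$, $P_{\infty}=T_{x_{0}}u$, so the displayed one-point osculation estimate holds uniformly in $x_{0}\in B_{1/2}$, and the Campanato argument finishes the proof.

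\textbf{The main obstacle.} The conceptual crux is ingredient (ii): the constant-right-hand-side Monge--Ampère equation, and the nonlinear model equation $\det(A+\nabla^{2}v_{0})=c_{0}$, must enjoy interior estimates of order at least $C^{3}$, proportional to the $L^{\infty}$-norm for small solutions. This full regularity is exactly what lets the improvement step gain the clean power $s^{3}$ (hence beat \emph{any} target $\alpha<1$), and it confines all differentiation to the constant-coefficient model, so that only $[f]_{\alpha'}$ — never $\nabla f$ or $W^{1,p}$-norms of $f$ — enters the final bound. Establishing this in a self-contained way (Pogorelov's interior estimate, Evans--Krylov, Schauder bootstrap, all for the constant-coefficient equation) is the real work. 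The remaining delicacy is bookkeeping in the iteration, above all keeping $\det A_{k}=c_{0}$ and $A_{k}$ uniformly elliptic across all infinitely many steps — which, as noted, is automatic precisely because the error $v_{0}$ solves the \emph{nonlinear} model equation rather than its linearization.
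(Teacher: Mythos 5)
Your proposal is correct in outline, but it follows a genuinely different route from the paper. You run the classical Safonov--Caffarelli polynomial-approximation / Campanato iteration: compare $u$ at each dyadic scale with the solution $w$ of the constant-right-hand-side model $\det\nabla^{2}w=c_{0}$, use ABP to measure $u-w$, use interior regularity of the model (Pogorelov, Evans--Krylov, Schauder on the differentiated constant-coefficient equation) to extract a third-order-accurate Taylor approximant, and iterate, keeping track of the determinant constraint $\det A_{k}=c_{0}$. This is in essence the proof of \cite{Caff} itself, i.e.\ exactly the prior argument that the paper is offering an alternative to. The paper instead argues by compactness and rigidity: it introduces a \emph{H\"older radius} $h_{\omega,q}$, supposes (for contradiction) that $h_{\omega_i,q_i}/d_{q_i}\to 0$, blows up at the almost-minimizing points $q_i$, shows via the Chen--Donaldson--Sun small-oscillation Schauder estimate (Prop.~\ref{prop CDS trick real case}) and the Hessian-potential lemma (Lemma~\ref{lem hessian solvability smooth case}) that the rescaled Hessians converge in $C^{\alpha}_{loc}$ to a global solution on $\R^{n}$ with constant RHS and two-sided Hessian bounds, and then invokes the Calabi--Pogorelov Liouville theorem (Thm.~\ref{thm Calabi Liouville}) to conclude the limit is affine, contradicting $h_{\widehat\omega_i,\widehat 0}=1$. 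The trade-offs: your iteration is fully quantitative and self-contained but requires the delicate machinery of Pogorelov's interior $C^{2}$ estimate to propagate ellipticity into the model and to make the ``small solution $\Rightarrow$ small $C^{3}$ norm'' step rigorous (this is precisely where the bookkeeping is heaviest, and where your sketch is thinnest); the paper's argument is softer and less explicit (no explicit rate emerges from the compactness) but isolates all the hard analysis into two black boxes --- a Liouville theorem and a perturbative Schauder estimate near a constant Hessian --- which, as the paper emphasizes, is what lets the same template carry over verbatim to the complex and conic settings.

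One substantive caution about your sketch: the assertion that Pogorelov's estimate ``carries ellipticity inward'' and that the small solution $v_{0}$ of $\det(A+\nabla^{2}v_{0})=c_{0}$ with $\|v_{0}\|_{L^{\infty}}\le\epsilon$ automatically satisfies $\|v_{0}\|_{C^{3}}\le C\epsilon$ is true but not immediate. Pogorelov and Evans--Krylov first give only a universal (order-one, not $O(\epsilon)$) bound on $\|\nabla^{2}v_{0}\|_{C^{\bar\alpha}}$; to gain the factor $\epsilon$ you must then linearize $\det(A+\nabla^{2}v_{0})-\det A=a^{ij}\partial_{ij}v_{0}=0$ with the now-H\"older cofactor coefficients and apply linear Schauder (and then differentiate the nonlinear equation once more and repeat) before you obtain $\|v_{0}\|_{C^{3}}\le C\|v_{0}\|_{L^{\infty}}$. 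You flag this as ``the real work,'' which is fair, but a reader should be warned that the order of operations matters: ellipticity control first, then Evans--Krylov for H\"older coefficients, then Schauder to see the linear-in-$\epsilon$ dependence, then one more differentiation. Modulo filling in that chain, the iteration and the Campanato conclusion are sound.
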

\begin{rmk} For $\alpha\in [0,1]$,  $|\cdot|_{\alpha,\Omega}$ ( $[\ \cdot\ ]_{\alpha,\Omega} $) means the $C^{\alpha}(\bar{\Omega})-$norm (seminorm) in the domain $\Omega$, $|\cdot|_{k,\alpha,\Omega}$ ( $[\ \cdot\ ]_{k,\alpha,\Omega} $) means the $C^{k,\alpha}(\bar{\Omega})-$norm (seminorm). Most of the
notations here follow the conventions in \cite{GT}.
\end{rmk}
\begin{rmk}  One of the  main features of Safonov and  Caffarelli's results  is that the $C^{2,\alpha}-$norm of the solution only depends mainly on the $C^{\alpha^{\prime}}$ of the right hands side, provided the $C^{2}-$estimate is already obtained (or equivalently, the equation is uniformly elliptic). 
\end{rmk}
\begin{rmk}The above mentioned Schauder estimates are never a complete list of existing beautiful estimates of this kind. Historically,  on the Schauder estimates on nonlinear uniformly-elliptic equations, we also have the work  of C, Burch  \cite{Burch},  J, Kovats \cite{Kovats},  Q,B, Huang \cite{QBHuang}. More recently,  X,J Wang \cite{XJWang} gives a nice 
Schauder estimate for both linear and nonlinear equations. For more work on Schauder estimates, we refer to the readers to \cite{XJWang} and the references therein. 
\end{rmk}

 The corresponding theory of Safonov and Caffarelli's results  in complex settings  also has important progress in recent years.  Assuming full second  derivative bound, Dinew-Zhang-Zhang \cite{DZZ} showed a $C^{2,\alpha}-$estimate for complex Monge-Ampere equations, which only depends on the H\"older continuity of the right hand side. Very recently, a theorem to the same strength as Safonov and Caffarelli's 
was proved by Yu Wang \cite{YuWang}, which   relies  on a clever trick to  convert the complex Monge-Ampere equation to a real equation, then apply Caffarelli's more general estimates in \cite{Caff}.

 In this note, following  \cite{CDS2}, \cite{Anderson}, we present   another proof  of Theorem \ref{thm of Caffarelli} and its complex analogue Theorem \ref{thm of Yu Wang}.  
 While we believe our presentation/proof contains some new element (i.e., input from Riemannian geometry), the idea of rescaling and Liouville type
 theorem in general goes back to long time ago, for example, see Leon Simon's beautiful proof of Schauder estimates for linear operators \cite{LeonSimon}. For this reason, we hope that our proof  to this classical theorem on
 full nonlinear PDEs, is still somewhat valuable.
 
\begin{thm}\label{thm of Yu Wang}   Suppose $B$ is a unit ball in $\mathbb{C}^n$, and
$\phi\in C^{2,\alpha^{\prime}}(\bar{B})$ is a pluri-subharmonic  function.  Suppose
\begin{equation}\label{equ MA}
\det \phi_{i\bar j} =  e^f > 0,\ \textrm{where}\ f \in C^{\alpha^{\prime}} (\bar{B}).
\end{equation}
 Then, for any $\alpha \in (0,\alpha^{\prime}),$  we have
\[
[\sqrt{-1}\partial \bar{\partial}\phi]_{\alpha,{B\over 4}} \leq C,
\] 
where $C$  depends on the $C^{0}(\bar{B})-$norm of  $\sqrt{-1}\partial \bar{\partial}\phi$,  the $C^{\alpha^{\prime}}(\bar{B})-$norm of $f$, the dimension $n$, the $\alpha^{\prime}$, and $\alpha$. 
\end{thm}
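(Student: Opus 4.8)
\noindent The strategy is a rescaling (blow‑up) argument in the spirit of L.\ Simon's proof of the linear Schauder estimates \cite{LeonSimon}, combined with a Liouville‑type rigidity theorem on $\mathbb{C}^n$, as in \cite{CDS2}, \cite{Anderson}. By a standard covering argument it suffices to bound $[\sqrt{-1}\partial\bar\partial\phi]_{\alpha}$ on the concentric ball of radius $1/2$. From the $C^{0}$‑bound on $\sqrt{-1}\partial\bar\partial\phi$, pluri‑subharmonicity, and $\det\phi_{i\bar j}=e^{f}\ge e^{-|f|_{0,B}}>0$ one first gets two‑sided bounds $\lambda I\le(\phi_{i\bar j})\le\Lambda I$ on $B_{1/2}$ with $\lambda,\Lambda$ controlled, so the equation is uniformly elliptic there. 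Suppose the conclusion fails: there are pluri‑subharmonic $\phi_k$ on $B$ with $\det(\phi_k)_{i\bar j}=e^{f_k}$, $|\sqrt{-1}\partial\bar\partial\phi_k|_{0,B}\le\Lambda_0$, $|f_k|_{\alpha',B}\le K$, but $[\sqrt{-1}\partial\bar\partial\phi_k]_{\alpha,B/4}\to\infty$. Write $H_k=(\phi_k)_{i\bar j}$ and $d_x=\mathrm{dist}(x,\partial B_{1/2})$, and consider the weighted Hölder quantity $Q_k=\sup_{x\neq y\in B_{1/2}}\min(d_x,d_y)^{\alpha}\,|H_k(x)-H_k(y)|/|x-y|^{\alpha}$; since $\phi_k\in C^{2,\alpha'}$ each $Q_k$ is finite, while $Q_k\gtrsim[H_k]_{\alpha,B/4}\to\infty$. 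Choose near‑maximizers $x_k,y_k$, set $\rho_k=|x_k-y_k|$ and (relabelling) $d_{x_k}\le d_{y_k}$; using $|H_k|\le\Lambda_0$ one gets $(\rho_k/d_{x_k})^{\alpha}\lesssim 1/Q_k\to 0$, so $\rho_k\ll d_{x_k}$ and $\rho_k\to 0$. Put $\sigma_k=\mathrm{osc}_{B_{2\rho_k}(x_k)}H_k$; the choice of $(x_k,y_k)$ forces $\mu_k\rho_k^{\alpha}\le\sigma_k\le C\mu_k\rho_k^{\alpha}$ with $\mu_k=|H_k(x_k)-H_k(y_k)|/\rho_k^{\alpha}$ and $C$ geometric.

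Now rescale: with $A_k=H_k(x_k)$ and $P_k$ the real $2$‑jet of $\phi_k$ at $x_k$, define $v_k(\zeta)=\big(\phi_k(x_k+\rho_k\zeta)-P_k(x_k+\rho_k\zeta)\big)\big/(\sigma_k\rho_k^{2})$, so that $(v_k)_{i\bar j}(\zeta)=\sigma_k^{-1}\big(H_k(x_k+\rho_k\zeta)-A_k\big)$. Then $(v_k)_{i\bar j}(0)=0$; $\mathrm{osc}_{B_1}(v_k)_{i\bar j}\ge 1/C>0$; and — this is the crux of the point‑selection — the maximality defining $Q_k$ gives $[(v_k)_{i\bar j}]_{\alpha,B_R}\le C$ uniformly on each fixed $B_R\subset\mathbb{C}^n$ once $k$ is large, where $C$ does not depend on $R$. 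Also $A_k+\sigma_k(v_k)_{i\bar j}(\zeta)=H_k(x_k+\rho_k\zeta)\in[\lambda I,\Lambda_0 I]$ and $\det\big(A_k+\sigma_k(v_k)_{i\bar j}(\zeta)\big)=e^{f_k(x_k+\rho_k\zeta)}$. A feature special to the complex setting must be respected here: the bound on $\sqrt{-1}\partial\bar\partial\phi_k$ controls only the $(1,1)$‑part of the real Hessian, so the potentials $v_k$ need not be locally $C^{2}$‑bounded; but the closed $(1,1)$‑forms $(v_k)_{i\bar j}$ are, so we pass to the limit at that level. After a subsequence, $(v_k)_{i\bar j}\to H_\infty$ in $C^{\beta}_{loc}$ for each $\beta<\alpha$, $A_k\to A_\infty\in[\lambda I,\Lambda_0 I]$, $\sigma_k\to\sigma_\infty\in[0,\Lambda_0]$, and $f_k(x_k)\to c_\infty$; and $H_\infty$ is a continuous $d$‑closed $(1,1)$‑form on $\mathbb{C}^n$ with $[H_\infty]_{\alpha,\mathbb{C}^n}<\infty$, $H_\infty(0)=0$, $\mathrm{osc}_{B_1}H_\infty>0$, and $A_\infty+\sigma_\infty H_\infty\in[\lambda I,\Lambda_0 I]$ everywhere. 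By the $\partial\bar\partial$‑lemma on $\mathbb{C}^n$ together with interior elliptic regularity, $H_\infty=\sqrt{-1}\partial\bar\partial\psi_\infty$ for some real $\psi_\infty\in C^{2,\alpha}_{loc}(\mathbb{C}^n)$.

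Passing to the limit in the equation gives a dichotomy. If $\sigma_\infty>0$, then $u_\infty=\psi_\infty+\sigma_\infty^{-1}\sum_{i,j}(A_\infty)_{i\bar j}z_i\bar z_j$ is entire pluri‑subharmonic, uniformly elliptic ($\sigma_\infty^{-1}\lambda I\le(u_\infty)_{i\bar j}\le\sigma_\infty^{-1}\Lambda_0 I$), and solves $\det(u_\infty)_{i\bar j}=e^{c_\infty}\sigma_\infty^{-n}$, i.e.\ a constant right‑hand side. If $\sigma_\infty=0$, write $\log\det\big(A_k+\sigma_k(v_k)_{i\bar j}\big)-\log\det A_k=\sigma_k\int_0^1\mathrm{tr}\big((A_k+t\sigma_k(v_k)_{i\bar j})^{-1}(v_k)_{i\bar j}\big)\,dt$ (the interpolating matrices stay in $[\lambda I,\Lambda_0 I]$), and divide by $\sigma_k$; the right‑hand side becomes $\sigma_k^{-1}\big(f_k(x_k+\rho_k\zeta)-f_k(x_k)\big)$, and here the hypothesis $\alpha<\alpha'$ is used decisively: since $\sigma_k\ge\mu_k\rho_k^{\alpha}\gtrsim Q_k\rho_k^{\alpha}$ while $|f_k(x_k+\rho_k\zeta)-f_k(x_k)|\le K(\rho_k|\zeta|)^{\alpha'}$, this ratio is $O\big(\rho_k^{\alpha'-\alpha}/Q_k\big)\to 0$ locally uniformly, and in the limit $\psi_\infty$ solves the homogeneous constant‑coefficient equation $\mathrm{tr}\big(A_\infty^{-1}(\psi_\infty)_{i\bar j}\big)=0$.

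The proof is finished by a Liouville theorem in each case, contradicting $\mathrm{osc}_{B_1}H_\infty>0$. When $\sigma_\infty=0$: a $\mathbb{C}$‑linear change of coordinates turns the equation into $\Delta\psi_\infty=0$ on $\mathbb{R}^{2n}$, so every component of $H_\infty=\sqrt{-1}\partial\bar\partial\psi_\infty$ is an entire harmonic function with $\mathrm{osc}_{B_R}\le C R^{\alpha}$, hence of strictly sub‑linear growth, hence constant (alternatively, differentiating $\mathrm{tr}(A_\infty^{-1}H_\infty)=0$ and using the closedness of $H_\infty$ shows each component of $H_\infty$ is itself harmonic, avoiding the potential entirely). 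When $\sigma_\infty>0$: the interior $C^{2,\gamma_0}$‑estimate of Evans and Krylov for the translation‑invariant, concave, uniformly elliptic equation $\log\det(u_\infty)_{i\bar j}=\mathrm{const}$ — whose constants depend only on $n,\lambda/\sigma_\infty,\Lambda_0/\sigma_\infty$ — applied on $B_{2R}$ and rescaled yields $[(u_\infty)_{i\bar j}]_{\gamma_0,B_R}\lesssim R^{-\gamma_0}\to 0$, so $(u_\infty)_{i\bar j}$, hence $H_\infty$, is constant. I expect the main obstacle to be the bookkeeping of the blow‑up normalization — the weight $\min(d_x,d_y)^{\alpha}$, the estimate $\rho_k\ll d_{x_k}$, and the comparison $\sigma_k\asymp\mu_k\rho_k^{\alpha}$ — together with the passage to the limit in the complex setting, where one cannot work directly with the potentials but must take limits of the closed $(1,1)$‑forms and reconstruct a potential afterwards; an alternative route avoiding the blow‑up is Yu Wang's reduction of \eqref{equ MA} to a real Monge–Amp\`ere equation followed by Theorem \ref{thm of Caffarelli}, but the argument above is the self‑contained one in the spirit of this note.
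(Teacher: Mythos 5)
Your proposal is correct, but it follows a genuinely different blow-up scheme from the paper's. The paper introduces a pointwise ``H\"older radius'' $h_{\omega,q}$, rescales so that this radius equals $1$ at the origin (leaving the metric itself unnormalized except through the $C^{0}$-bound), bootstraps from the resulting $C^{\alpha}$-bound to a $C^{\alpha'}$-bound via the Chen--Donaldson--Sun trick (Proposition~\ref{prop CDS trick}) so that the rescaled metrics converge in $C^{\alpha}$, and then applies Riebesehl--Schulz (Theorem~\ref{thm Liouville}) once; the contradiction is with $h_{\widehat\omega_i,\widehat 0}=1$, which is a statement about the $C^{\alpha}$-\emph{seminorm}, and therefore requires genuine $C^{\alpha}$-convergence --- hence the need for Proposition~\ref{prop CDS trick}. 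You instead use L.~Simon's weighted quantity $Q_k$ with near-maximizing pairs $(x_k,y_k)$, rescale at the scale $\rho_k=|x_k-y_k|$ (not at the H\"older-radius scale), and additionally normalize by subtracting $H_k(x_k)$ and dividing by the oscillation $\sigma_k$. This second normalization forces a dichotomy that the paper does not face: if $\sigma_\infty>0$ the limit solves the nonlinear constant-coefficient Monge--Amp\`ere equation and you invoke a Liouville theorem that is in substance exactly Theorem~\ref{thm Liouville} (your appeal to ``Evans--Krylov interior estimate'' is slightly informal here, since the whole point of the complex setting is that only the $(1,1)$-Hessian is controlled --- Riebesehl--Schulz, i.e.\ the paper's Theorem~\ref{thm Liouville}, is the precise reference and should be cited); if $\sigma_\infty=0$ you linearize and must prove a separate Liouville theorem for a constant-coefficient linear equation, which is where the hypothesis $\alpha<\alpha'$ enters your argument. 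In exchange, your contradiction is at the level of $\mathrm{osc}_{B_1}H_\infty>0$, a $C^{0}$-quantity, so $C^{0}_{loc}$-convergence suffices to pass the determinant to the limit and you can dispense with the bootstrapping step entirely; you still need the $\partial\bar\partial$-solvability of Lemma~\ref{lem dbar solvability smooth case} to produce the potential $\psi_\infty$. In short, the paper trades an extra lemma (Proposition~\ref{prop CDS trick}) for a single-case Liouville argument, while you trade a case analysis and a linear Liouville theorem for a weaker convergence requirement; both are legitimate.
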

\begin{rmk} Theorem \ref{thm of Yu Wang} is  similar to but  slightly different from Yu Wang's theorem in \cite{YuWang}. First, Theorem \ref{thm of Yu Wang} is an aprori estimate (it assumes the solution is in $C^{2,\alpha^{\prime}}$), while the theorem of Yu Wang is a stronger regularity theorem. Second, the norm bound in Theorem \ref{thm of Yu Wang} does not depend on $|\phi|_{L^{\infty}}$ i.e the lower order bound on the potential. This is essentially because equation (\ref{equ MA}) is a geometric equation for the K\"ahler-metric $\sqrt{-1}\partial\bar{\partial} \phi$.  Given the importance of the Calabi's, Evan-Krylov-Safonov's, Safonov's, and  Caffarelli's Schauder estimates, we hope our new proof is worthwhile to present  separatedly here.
\end{rmk}

We hope our new proof   has further applications in fully nonlinear equations on singular spaces. Actually, our new proof (in section 2) is developed in the proof of 
Theorem 1.7 in \cite{ChenWanglongtime} on K\"ahler-Ricci flows with conic singularities.   The purpose of this note is to give a more simplified 
and direct proof than the one in \cite{ChenWanglongtime} (from page 13 to page 19), and to 
show our method also works for real Monge-Ampere equations. Namely, the following theorem in essentially proved in \cite{ChenWanglongtime} (from page 13 to page 19).
   \begin{thm}\label{Thm elliptic C2alphabeta estimate for conic MA equations}  Suppose $\beta\in (0,1)$ and $0<\alpha^{\prime}<\min \{\frac{1}{\beta}-1,1\}$. Suppose $\bar{B}$ is the unit ball centered at the origin with respect to the model cone metric 
    $$\omega_{\beta}=\sqrt{-1}\frac{\beta^2}{|z|^{2-2\beta}} dz\wedge d\bar{z}+\sqrt{-1}\Sigma_{k=2}^{n}dv_{k}\wedge d\bar{v}_{k},$$ which is defined over $\C\times \C^{n-1}$ with cone singularity of angle $2\beta\pi$ along the divisor $\{0\}\times \C^{n-1}$.
 Suppose $\phi$ is a pluri-subharmonic function in $C^{2,\alpha^{\prime},\beta}(\bar{B})$ such that 
$$\frac{1}{K}\omega_{\beta}\leq \sqrt{-1}\partial\bar{\partial}\phi\leq K\omega_{\beta}\ \textrm{ over} \ \bar{B}\setminus D\ \textrm{for some}\  K\geq 1.$$

Denote $F_{\phi}$ as $\log(|z|^{2-2\beta}\det \phi_{i\bar j})$, which means  $$\det \phi_{i\bar j} =  \frac{e^{F_{\phi}}}{|z|^{2-2\beta}}\ \textrm{over}\  \bar{B}\setminus D.$$
Then for any $\alpha\in (0,\alpha^{\prime})$, there exists a constant $C$ depending on $|F_{\phi}|_{\alpha^{\prime},\beta,B}$, $K$, $n$, $\alpha$, $\alpha^{\prime}$, and $\beta$, such that \[[\sqrt{-1}\partial \bar{\partial}\phi]_{\alpha,\beta,\frac{B}{4}}\leq C.\]
\end{thm}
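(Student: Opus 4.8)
The plan is to run a blow-up / contradiction argument in the spirit of the proof of Theorems \ref{thm of Caffarelli} and \ref{thm of Yu Wang}, but now in the weighted H\"older spaces $C^{k,\alpha,\beta}$ adapted to the model cone metric $\omega_\beta$. Suppose the estimate fails: then there is a sequence of pluri-subharmonic potentials $\phi_m$ on $\bar B$, each satisfying $\frac1K\omega_\beta\le\sqrt{-1}\partial\bar\partial\phi_m\le K\omega_\beta$ with $|F_{\phi_m}|_{\alpha',\beta,B}\le\Lambda$, for which $[\sqrt{-1}\partial\bar\partial\phi_m]_{\alpha,\beta,B/4}\to\infty$. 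Using the standard point-selection (Campanato / maximal-oscillation) lemma in the conic setting, I would pick points $p_m\in \bar{B/4}$ and scales $r_m\to0$ realizing (up to a bounded factor) the worst ratio $[\sqrt{-1}\partial\bar\partial\phi_m]_{\alpha,\beta}$ near $p_m$ at scale $r_m$, and then rescale: let $\psi_m$ be the parabolic-type rescaling of $\phi_m$ by $r_m$ centered at $p_m$, normalized so that the $\alpha$-H\"older quotient of $\sqrt{-1}\partial\bar\partial\psi_m$ at the base point and unit scale is $1$, while the full $C^{2,\alpha,\beta}$-seminorm on balls of radius $R$ stays bounded by $CR^{\alpha'-\alpha}$ or similar. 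The key point is that the cone metric $\omega_\beta$ is scale-invariant under $z\mapsto \lambda z$, $v_k\mapsto \lambda^{1/\beta}v_k$ (or the natural anisotropic scaling making $\omega_\beta$ homogeneous), so the rescaled equations are of the same type.

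Two cases arise depending on the behavior of $d_{\omega_\beta}(p_m, D)/r_m$. In the \emph{interior case} (this ratio $\to\infty$), the rescaled points run off to a region where the metric looks Euclidean, the singular divisor disappears to infinity, and one reduces to the smooth complex Monge-Ampere situation; here the limit $\psi_\infty$ solves a constant-coefficient homogeneous complex Monge-Ampere equation $\det\psi_{\infty,i\bar j}=\mathrm{const}$ on $\mathbb{C}^n$ with $\sqrt{-1}\partial\bar\partial\psi_\infty$ bounded and with $[\sqrt{-1}\partial\bar\partial\psi_\infty]_{\alpha}\le C(1+R)^{\alpha'-\alpha}$ growth, hence by the interior estimate plus a Liouville-type argument (differentiating the limit equation, as $\psi_\infty$ is now smooth, or invoking the classical Evans--Krylov-type rigidity) $\sqrt{-1}\partial\bar\partial\psi_\infty$ must be constant, contradicting that its H\"older quotient equals $1$. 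In the \emph{boundary case} (the ratio stays bounded), after passing to a subsequence the rescaled balls converge, in the pointed Gromov--Hausdorff sense with $C^{2,\alpha,\beta}$-convergence of potentials, to an entire solution on the model cone $(\mathbb{C}\times\mathbb{C}^{n-1},\omega_\beta)$ itself (the cone is its own tangent cone), again solving $\det\psi_{\infty,i\bar j}=\frac{c}{|z|^{2-2\beta}}$ with two-sided bounds and controlled growth of $[\sqrt{-1}\partial\bar\partial\psi_\infty]_{\alpha,\beta}$.

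The heart of the argument, and the main obstacle, is the \emph{Liouville theorem on the model cone}: any global pluri-subharmonic solution $\psi$ on $(\mathbb{C}\times\mathbb{C}^{n-1},\omega_\beta)$ of $\det\psi_{i\bar j}=c\,|z|^{2\beta-2}$ with $\frac1K\omega_\beta\le\sqrt{-1}\partial\bar\partial\psi\le K\omega_\beta$ and with subpolynomial growth control on the conic H\"older seminorm of $\sqrt{-1}\partial\bar\partial\psi$ must have $\sqrt{-1}\partial\bar\partial\psi$ identically equal to a parallel (constant) form — contradicting the normalization. I would prove this by the following steps: (i) an interior a priori estimate away from the divisor (the classical smooth Evans--Krylov/Caffarelli estimate) giving decay of the H\"older quotient at large scales away from $D$; (ii) a tangential-direction argument: differentiating in the $v_k$ directions (which are honest holomorphic directions commuting with the cone structure) shows the $v_k$-derivatives of $\psi$ are again bounded solutions of a linearized uniformly elliptic equation and hence, by De Giorgi--Nash--Moser on the cone (which holds since $\omega_\beta$ has bounded geometry away from $D$ and the right volume growth globally), must be affine, so $\psi$ reduces to a function of $z$ alone plus a quadratic in $v$; (iii) solving the resulting one-complex-variable conic Monge-Ampere ODE explicitly and checking that the only solutions with the required two-sided bound and growth are $\psi=a|z|^{2\beta}+\text{(linear)}$, whose $\sqrt{-1}\partial\bar\partial$ is a constant multiple of $\omega_\beta$'s first term, hence parallel. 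Assembling the two cases, the contradiction is reached and the theorem follows. (In the interior case one uses the standard $\mathbb{C}^n$ Liouville theorem for $\det=\text{const}$, which is classical.) The delicate technical points are the precise formulation of the conic point-selection lemma and verifying that $C^{2,\alpha,\beta}$-convergence survives the blow-up — these are exactly the estimates carried out on pages 13--19 of \cite{ChenWanglongtime}, which I would streamline here.
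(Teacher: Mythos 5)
The paper does not actually give a proof of Theorem~\ref{Thm elliptic C2alphabeta estimate for conic MA equations} in this note: it states the result and refers the reader to pages 13--19 of \cite{ChenWanglongtime}, explaining that the streamlined argument of Section~\ref{A new proof of the aprori version of Yu-Wang's Cafferelli type estimate} is a simplified version, in the smooth setting, of the method used there. Measured against that method, your high-level scheme — blow-up by contradiction, rescale, pass to a limit, and appeal to a Liouville-type rigidity theorem, with the interior/boundary dichotomy according to whether $d_{\omega_\beta}(p_m,D)/r_m\to\infty$ or stays bounded — is the right framework, and you have correctly identified the Liouville theorem on the model cone as the crux. To align more closely with the authors' streamlining, you should replace the generic ``Campanato / maximal-oscillation point selection'' by the H\"older-radius device of Step~1 of Section~\ref{A new proof of the aprori version of Yu-Wang's Cafferelli type estimate} (it is exactly this device that lets the contradiction close at the end), and you also need the two auxiliary inputs that the paper makes explicit in the smooth case and which you omit: the conic analogue of Lemma~\ref{lem dbar solvability smooth case} (solvability of the $\sqrt{-1}\partial\bar\partial$-equation with tame $C^{2,\alpha,\beta}$ estimates, so that the blow-up limit admits local potentials) and the conic analogue of Proposition~\ref{prop CDS trick} (the Chen--Donaldson--Sun small-oscillation bootstrap in $C^{\alpha,\beta}$, needed to upgrade the $C^\alpha$ bound coming from the H\"older radius to a uniform $C^{\alpha'}$ bound before extracting a limit).

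There are two concrete errors in your sketch. First, the scaling that renders $\omega_\beta$ homogeneous is wrong: under $z\mapsto\lambda z$, the singular factor scales as $|\lambda|^{2\beta}$, so you need $v_k\mapsto\lambda^{\beta}v_k$ (equivalently $z\mapsto\mu^{1/\beta}z$, $v_k\mapsto\mu v_k$); you wrote $v_k\mapsto\lambda^{1/\beta}v_k$, which over-scales the $v$-directions for $\beta\in(0,1)$. Second, and more seriously, step (ii) of your proposed conic Liouville proof claims the $v_k$-derivatives of $\psi$ are \emph{bounded} solutions of the linearized equation; they are not — you only control $\sqrt{-1}\partial\bar\partial\psi$, so $\partial_{v_k}\psi$ may grow linearly with respect to the conic distance. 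One must therefore invoke a Liouville theorem for linear-growth $\Delta_{\omega_\psi}$-harmonic functions on the cone (resting on a Harnack inequality \`a la \cite{CDS2}), and the space of such functions is larger than the affine ones you allow: it also contains $|z|^{2\beta}$. So the conclusion ``$\psi$ is a function of $z$ alone plus a quadratic in $v$'' does not follow immediately — a priori one gets extra pieces like $|z|^{2\beta}\,\mathrm{Re}(v_k)$, and one must rule these out using the two-sided metric bound before reducing to a one-variable ODE in $z$. Also, in your step (ii) you differentiate the equation, which runs against the spirit advertised in the paper's abstract; since the Liouville theorem is a separate structural input this is not forbidden, but you should either flag it or cite the rigidity result directly (\cite{CDS2}, \cite{ChenWanglongtime}) rather than re-derive it. Finally, the phrase ``parabolic-type rescaling'' is a misnomer here: the problem is elliptic and the rescaling is simply the $r^{-2}$-normalized potential centered at $p_m$.
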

 The $C^{2,\alpha,\beta}$ and $C^{\alpha,\beta}$ function spaces are defined by Donaldson in \cite{Don}.  For further references on definition of these function spaces,  see \cite{ChenWang}, \cite{ChenWanglongtime}, \cite{WYQWF}.

 Since this is a short note in the smooth case, we  will not go into the ever-growing list of works in conical settings, instead we  refer the readers to the following  list of authors and their work    related to  the $C^{2,\alpha}-$estimate in conical K\"ahler geometry: Donaldson  \cite{Don}, Brendle \cite{Brendle},  Guenancia-Paun \cite{GP}, Chen-Donaldson-Sun\cite{CDS2}, Jeffres-Mazzeo-Rubinstein \cite{JMR}, Calamai-Zheng \cite{CalamaiZheng}...

\textbf{Acknowledgement:} Both authors are grateful to Yu  Yuan for useful suggestions on  earlier versions of this paper. The second author would like to thank Weiyong He for related discussions. 

\section{ A new proof of the aprori version of Yu-Wang's Cafferelli type estimate for complex Monge-Ampere equations. \label{A new proof of the aprori version of Yu-Wang's Cafferelli type estimate}}
Our proof is based on Anderson's rescaling idea in \cite{Anderson}  and Chen-Donaldson-Sun's trick in \cite{CDS2}.

  In K\"ahler geometry, a  K\"ahler metric usually means a closed positive $(1,1)-$form. Give a $\phi$ as in Theorem \ref{thm of Yu Wang}, 
  $\sqrt{-1}\partial \bar{\partial} \phi$ is then a K\"ahler-metric. In general, given a K\"ahler-metric $\omega$ in an open set $\Omega$,  a pluri-subharmonic function $\phi$  is said to be a potential of  $\omega$ in  $\Omega$ if 
\[\omega=\sqrt{-1}\partial \bar{\partial}\phi,\ \textrm{or equivalently}\ \omega_{k\bar{l}}=\frac{\partial^2 \phi}{\partial z_{k}\partial \bar{z}_{l}}\ \textrm{over}\ \Omega,\]
where $\omega_{k\bar{l}}$ is defined as $\omega=\sqrt{-1}\omega_{k\bar{l}}dz_{k}\wedge d\bar{z}_{l}$.
Under the coordinates $z_{1},...,z_{n}$, $\omega_{k\bar{l}}$ ($\frac{\partial^2 \phi}{\partial z_{k}\partial \bar{z}_{l}}$) is a Hermitian-matrix-valued function.   In the rest of this section, we work with the metrics $\omega$ most of time rather than the potentials $\phi$. This is because our proof is essentially Riemannian geometry.

Our proof depends on the  following 3 building blocks.  

(1): The solvability of $\sqrt{-1}\partial \bar{\partial}-$equation with tame estimates. 
 \begin{lem}\label{lem dbar solvability smooth case} Suppose $\frac
{1}{\Lambda}<r<\Lambda$ for some $\Lambda>0$, then there exists a constant $C_{\Lambda}$ depending on $\Lambda$, $n$, and $\alpha$ with the following 
properties. 

 Suppose  $\eta\in C^{\alpha}(B_{r})$ is a closed real $(1,1)$-form. Then there exists a  real valued solution $\varphi\in C^{2,\alpha}(B_{\frac{r}{2}})$ to
 
\begin{equation}\label{equ ddbar equation smooth}\sqrt{-1}\partial \bar{\partial}\varphi=\eta\ \textrm{over}\ B_{\frac{r}{2}}
\end{equation} 
such that $|\varphi|_{0,B_{\frac{r}{2}}}\leq Cr^2|\eta|_{0, B_{r}},$
where $C$ is constant depending on $n$. Consequently, 
\[|\varphi|_{2,\alpha,B_{\frac{r}{4}}}\leq C_{\Lambda}|\eta|_{\alpha,B_{r}}.\]
\end{lem}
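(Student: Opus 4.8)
\textbf{Proof proposal for Lemma \ref{lem dbar solvability smooth case}.}

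The plan is to first establish the $C^0$-bound in the scale-invariant form, and then bootstrap to the $C^{2,\alpha}$-estimate using interior elliptic regularity for the (scalar constant-coefficient) Laplace/Poisson equation. Observe that if $\sqrt{-1}\partial\bar\partial\varphi=\eta$ then, taking the trace with respect to the Euclidean metric, $\varphi$ satisfies a Poisson equation $\Delta\varphi = g$ on $B_r$ where $g=\mathrm{tr}_{\,\mathrm{eucl}}\,\eta$ is a $C^\alpha$-function with $|g|_{0,B_r}\le c(n)|\eta|_{0,B_r}$. Conversely, the condition that $\eta$ be closed (of type $(1,1)$) is exactly what is needed so that the scalar potential one produces actually reproduces all of $\eta$ and not just its trace; so the real work is to produce a potential at all, not merely to solve the Poisson equation.

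First I would reduce to the unit ball by rescaling: set $\tilde\varphi(z)=r^{-2}\varphi(rz)$, $\tilde\eta(z)=\eta(rz)$, which transforms (\ref{equ ddbar equation smooth}) into the same equation on $B_1$, turns $|\varphi|_{0,B_{r/2}}\le Cr^2|\eta|_{0,B_r}$ into $|\tilde\varphi|_{0,B_{1/2}}\le C|\tilde\eta|_{0,B_1}$, and absorbs all powers of $r$ cleanly; the constraint $\frac1\Lambda<r<\Lambda$ then only enters when converting $C^\alpha$-seminorms back and forth (the Hölder seminorm of $\tilde\eta$ picks up a factor $r^\alpha\le\Lambda^\alpha$), which is why the final constant is allowed to depend on $\Lambda$. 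So it suffices to treat $r=1$. Next, to construct $\varphi$ on $B_1$: because $\eta$ is a closed real $(1,1)$-form on the ball, which is contractible, the local $\partial\bar\partial$-lemma gives a real-valued potential $\psi\in C^{1,\alpha}$ (say) with $\sqrt{-1}\partial\bar\partial\psi=\eta$; one standard way to do this with quantitative control is to first solve $d\gamma=\eta$ with $\gamma$ a real $1$-form given by an explicit homotopy/Bogovskii-type formula (bounded by $|\eta|_0$ up to a dimensional constant), decompose $\gamma=\gamma^{1,0}+\gamma^{0,1}$, use closedness of $\eta$ to see $\bar\partial\gamma^{0,1}=0$ hence (Dolbeault, again via an explicit integral operator on the ball) $\gamma^{0,1}=\bar\partial h$ for some complex $h$, and set $\psi = 2\,\mathrm{Im}\,h$ or the appropriate real combination so that $\sqrt{-1}\partial\bar\partial\psi=\eta$. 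Each step uses an explicit integral kernel on the ball, so one gets $|\psi|_{0,B_{3/4}}\le C(n)|\eta|_{0,B_1}$ directly.

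Having \emph{some} potential $\psi$, I would then produce the \emph{good} potential $\varphi$ with the sharp $C^0$-bound by correcting with a pluriharmonic function: on a slightly smaller ball solve the Dirichlet problem for the (scalar) Laplacian, $\Delta v = \Delta\psi$ in $B_{3/4}$ with $v=0$ on $\partial B_{3/4}$, and note $\Delta\psi = \mathrm{tr}_{\mathrm{eucl}}\eta \in C^\alpha$ with $|\Delta\psi|_0\le c(n)|\eta|_0$; then $\varphi:=v$ already satisfies $\sqrt{-1}\partial\bar\partial\varphi=\eta$ provided $\psi-v$ is pluriharmonic, which holds because $\psi-v$ is harmonic and... hmm, harmonic is weaker than pluriharmonic, so more care is needed here. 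The clean fix: keep the potential $\psi$ from the explicit construction, and simply note $|\psi|_{0,B_{1/2}}\le C(n)|\eta|_{0,B_1}$ already holds from the kernel bounds, which is the claimed $C^0$-estimate (with $r=1$); there is no need to optimize further. Finally, for the consequence $|\varphi|_{2,\alpha,B_{r/4}}\le C_\Lambda|\eta|_{\alpha,B_r}$: on $B_{r/2}$ we have $\Delta\varphi=\mathrm{tr}_{\mathrm{eucl}}\,\eta\in C^\alpha$ with $|\mathrm{tr}_{\mathrm{eucl}}\,\eta|_{\alpha,B_{r/2}}\le c(n)|\eta|_{\alpha,B_{r/2}}$, so interior Schauder for the constant-coefficient Laplacian on the pair of balls $B_{r/4}\subset B_{r/2}$ gives $|\varphi|_{2,\alpha,B_{r/4}}\le C(n)\big(|\mathrm{tr}_{\mathrm{eucl}}\,\eta|_{\alpha,B_{r/2}} + r^{-2}|\varphi|_{0,B_{r/2}}\big)$, and the $C^0$-bound controls the last term by $C(n)|\eta|_{0,B_r}$; collecting the $r$-powers and using $\frac1\Lambda<r<\Lambda$ produces a constant $C_\Lambda$ depending only on $\Lambda,n,\alpha$.

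The main obstacle is the construction step — producing a genuine scalar \emph{pluri}potential $\psi$ for the closed real $(1,1)$-form $\eta$ with an explicit, dimensionally-controlled $C^0$-bound $|\psi|_{0}\le C(n)|\eta|_0$. Solving $\Delta\psi=\mathrm{tr}\,\eta$ gives a function whose $\sqrt{-1}\partial\bar\partial$ has the right trace but not necessarily the right off-diagonal entries, so one really must invoke the local $\partial\bar\partial$-lemma; the quantitative version of it (via explicit homotopy operators for $d$ and $\bar\partial$ on a ball, with operator norms depending only on $n$ when the ball is fixed) is the technical heart. Once a controlled potential exists, the passage to $C^{2,\alpha}$ is routine interior Schauder for $\Delta$ plus bookkeeping of scaling factors, and this is where the hypothesis $\frac1\Lambda<r<\Lambda$ is used — purely to bound powers of $r$ in converting between scale-invariant and ordinary Hölder norms.
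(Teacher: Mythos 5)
Your argument is essentially the same as the one the paper refers to: the paper does not prove Lemma~\ref{lem dbar solvability smooth case} here (the Remark defers it to Lemma~7.1 of \cite{ChenWangRegularity}), but explicitly states in Section~\ref{A new proof of the aprori version of  Cafferelli's  estimate} that the proof uses Griffiths--Harris' trick \cite{GH} together with H\"ormander's $\bar{\partial}$-estimates \cite{Hormander} --- which is precisely your route of solving $d\gamma=\eta$ by an explicit Poincar\'e homotopy operator, then $\bar{\partial}h=\gamma^{0,1}$ with a quantitative bound, setting $\psi=2\,\mathrm{Im}\,h$, and finally bootstrapping via interior Schauder for $\Delta\varphi=\mathrm{tr}_{\mathrm{eucl}}\eta$. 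The only minor variant is that you propose an explicit integral kernel for the $\bar{\partial}$-step where the paper cites H\"ormander's $L^2$ theory (both yield the needed $C^0$-control), and you correctly identified and discarded the false detour of ``correcting by a harmonic function,'' since harmonic is strictly weaker than pluriharmonic and would not preserve the full complex Hessian.
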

\begin{rmk} Lemma \ref{lem dbar solvability smooth case} is an simpler version of Lemma 7.1 in \cite{ChenWangRegularity}. 
\end{rmk} 

(2): The  Liouville theorem in the complex case. 
\begin{thm}\label{thm Liouville}(Riebesehl; Schulz)(\cite{RS}) Suppose $\omega$ is a K\"ahler-metric  defined over $\C^{n}$ which admits a $C^{2,\alpha}-$potential over any finite ball. Suppose there is a constant  $K$ such that  
\begin{equation}
det\omega_{k\bar{l}}=1,\ \frac{1}{K}I\leq \omega_{k\bar{l}}\leq KI\ \textrm{over}\ \C^{n}.
\end{equation}
Then, for any $1\leq k,l\leq n$, $\omega_{k\bar{l}}$ is a constant. 
\end{thm}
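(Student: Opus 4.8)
The plan is to combine the interior $C^{2,\alpha}$-estimate for the uniformly elliptic complex Monge-Amp\`ere equation with a rescaling argument, in the spirit of Leon Simon's proof of the linear Schauder estimates \cite{LeonSimon}: a \emph{scale-invariant} interior bound for the complex Hessian, together with the fact that $\C^n$ carries no intrinsic length scale, will force the Hessian to be constant. Fix $R>0$ and choose a $C^{2,\alpha}$-potential $\phi$ of $\omega$ on the ball $B_R=B_R(0)\subset\C^n$ (such a $\phi$ exists by hypothesis, and is unique up to a pluriharmonic function, so the complex Hessian $\phi_{i\bar j}=\omega_{i\bar j}$ is well-defined on $B_R$ independently of the choice). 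I would rescale by $\phi^{(R)}(z):=R^{-2}\phi(Rz)$, so that $(\phi^{(R)})_{i\bar j}(z)=\phi_{i\bar j}(Rz)$; then $\phi^{(R)}$ is again plurisubharmonic on $B_1$ with $\det(\phi^{(R)})_{i\bar j}=1$ and $\frac{1}{K}I\leq(\phi^{(R)})_{i\bar j}\leq KI$. In particular the $C^0$-norm of the complex Hessian of $\phi^{(R)}$ on $B_1$ is $\leq K$, uniformly in $R$.

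The key ingredient is the interior $C^{2,\alpha}$-estimate for the uniformly elliptic complex Monge-Amp\`ere equation: if $\psi$ is plurisubharmonic on $B_1$ with $\det\psi_{i\bar j}=1$ and $\frac{1}{K}I\leq\psi_{i\bar j}\leq KI$, then for each $\alpha\in(0,1)$
\[
[\psi_{i\bar j}]_{\alpha,B_{1/4}}\leq C,
\]
with $C=C(n,K,\alpha)$ depending only on the ellipticity and the dimension, and \emph{not} on $|\psi|_{L^\infty}$. This is the Evans-Krylov-Safonov phenomenon \cite{Evans1},\cite{Evans2},\cite{Krylov} --- $\log\det$ is concave on Hermitian positive matrices and the linearized operator $\psi^{i\bar j}\partial_i\partial_{\bar j}$ is uniformly elliptic with constant $K^2$ --- in its sharp form (dependence only on the $C^0$-norm of the complex Hessian, and in general on the $C^{\alpha'}$-norm of the right-hand side, here $f\equiv 0$), which in the complex setting is the theorem of Dinew-Zhang-Zhang \cite{DZZ}. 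Applying it to $\psi=\phi^{(R)}$ and undoing the scaling,
\[
[\omega_{i\bar j}]_{\alpha,B_{R/4}}=[\phi_{i\bar j}]_{\alpha,B_{R/4}}=R^{-\alpha}[(\phi^{(R)})_{i\bar j}]_{\alpha,B_{1/4}}\leq C(n,K,\alpha)\,R^{-\alpha}.
\]
Now let $R\to\infty$: for any fixed $\rho$ and all $R\geq 4\rho$ one has $[\omega_{i\bar j}]_{\alpha,B_\rho}\leq C R^{-\alpha}\to 0$, hence $[\omega_{i\bar j}]_{\alpha,B_\rho}=0$ for every $\rho$, and a H\"older function with vanishing seminorm on the connected set $\C^n$ is constant; so each $\omega_{k\bar l}$ is constant.

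The substantive step --- and where I expect the real difficulty to be, everything else being soft rescaling --- is the scale-invariant interior estimate of the second paragraph, and two cautions are worth recording. First, one must use the ``non-differentiated'' form, with dependence only on the $C^{\alpha'}$-norm (here $C^0$-norm) of the right-hand side: under $\phi\mapsto\phi^{(R)}$ there is no control on $|\phi^{(R)}|_{L^\infty}$ --- indeed the full \emph{real} Hessian of a plurisubharmonic function with bounded complex Hessian need not be bounded at all --- so a version of Evans-Krylov carrying $|\psi|_{L^\infty}$ on the right would be useless here. Second, the complex Monge-Amp\`ere equation is only \emph{degenerate} elliptic when regarded as a real second-order equation for $\phi\colon\R^{2n}\to\R$ (it sees $\phi_{i\bar j}$, not the holomorphic Hessian $\phi_{ij}$), so the real Evans-Krylov theorem does not apply verbatim; one reruns the oscillation-decay argument using the complex linearized operator $\psi^{i\bar j}\partial_i\partial_{\bar j}$ and the concavity in the $\psi_{i\bar j}$-variables, as is carried out in \cite{DZZ} (and, in a slightly different normalization, already by Riebesehl and Schulz). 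A more classical but more laborious alternative, available because $f\equiv 0$, would be to first bootstrap $\phi$ to be smooth by Schauder iteration and then run Calabi's third-derivative estimate \cite{Calabi58} together with a maximum principle on exhausting balls with cut-offs; I would prefer the rescaling route above for its brevity.
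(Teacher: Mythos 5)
The paper does not prove this theorem at all: it is quoted as Theorem~2 of Riebesehl--Schulz \cite{RS}, with the remark observing that the existence of local potentials upgrades (via Lemma~\ref{lem dbar solvability smooth case}) to a global potential over $\C^n$, at which point the statement becomes literally their Theorem~2. So you are supplying a proof where the paper supplies a citation; that is fine in principle, and your blow-down strategy (a scale-invariant interior $C^{2,\alpha}$ estimate plus rescaling $R\to\infty$) is indeed the standard route to Liouville theorems of this kind, and is close in spirit to what Riebesehl--Schulz actually do.

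However, the one step you yourself flag as substantive --- the scale-invariant interior estimate --- is exactly where your write-up has a gap. You attribute to Dinew--Zhang--Zhang \cite{DZZ} an estimate $[\psi_{i\bar j}]_{\alpha,B_{1/4}}\le C(n,K,\alpha)$ depending only on the $C^0$-norm of the \emph{complex} Hessian. But, as the paper's introduction explicitly notes, DZZ work under a \emph{full} second-derivative bound (real Hessian), and your rescaled potential $\phi^{(R)}(z)=R^{-2}\phi(Rz)$ has bounded complex Hessian but an \emph{a priori} uncontrolled real Hessian and uncontrolled $L^\infty$-norm (the pluriharmonic part is invisible to the equation and can grow arbitrarily). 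You notice this difficulty, correctly warn that an estimate carrying $|\psi|_{L^\infty}$ would be useless under your naive rescaling, but then cite DZZ anyway for a version free of that dependence --- which is precisely the stronger statement that is the subject of the paper's Theorem~\ref{thm of Yu Wang}, not of \cite{DZZ}. Invoking a theorem of the strength of Theorem~\ref{thm of Yu Wang} here would also make the paper's proof scheme circular, since the Liouville theorem is one of its three building blocks. There are two clean fixes. First, instead of the rigid rescaling $\phi^{(R)}=R^{-2}\phi(Rz)$, apply the paper's own Lemma~\ref{lem dbar solvability smooth case} on each ball $B_R$ to produce a potential with $|\varphi|_{0,B_{R/2}}\le C R^2|\omega|_{0,B_R}$; this normalizes the pluriharmonic ambiguity, gives a uniform $L^\infty$ bound for the rescaled potentials, and legitimizes the use of an $L^\infty$-dependent complex Evans--Krylov estimate. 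Second --- and this is the route actually free of any risk of circularity with Theorem~\ref{thm of Yu Wang} --- exploit that $f\equiv 0$ is smooth: bootstrap $\phi$ to $C^\infty_{\mathrm{loc}}$ by difference quotients and Schauder on the linearized equation $\phi^{i\bar j}\partial_i\partial_{\bar j}(\partial_k\phi)=0$, and then invoke Calabi's third-order estimate \cite{Calabi58}, which is already written in terms of the metric $\omega$, is manifestly scale-invariant, and depends only on $K$ and the ball radius. You mention this alternative in your last sentence but dismiss it for brevity; in the context of this paper it is preferable. With one of these two repairs your rescaling computation $[\omega_{i\bar j}]_{\alpha,B_{R/4}}\le C R^{-\alpha}\to 0$ is correct and the proof closes.

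One smaller point: you say the potential on $B_R$ is "unique up to a pluriharmonic function, so the complex Hessian is well-defined independently of the choice." That is true but not the issue; the complex Hessian $\omega_{i\bar j}$ is given data, and what is \emph{not} well-defined (and matters for applying any estimate stated in terms of the potential) is the $L^\infty$ and real-Hessian size of the potential you pick. That is exactly the quantity Lemma~\ref{lem dbar solvability smooth case} is designed to control, and it is why the lemma appears as building block~(1) in the paper.
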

\begin{rmk}By the proof of Lemma \ref{lem dbar solvability smooth case} (see Lemma 7.1 in \cite{ChenWangRegularity}),  the $\omega$ in Theorem \ref{thm Liouville} actually admits a global potential $\phi$ over $\C^{n}$, thus Theorem \ref{thm Liouville} is actually the same as Theorem 2 in \cite{RS}. However, we would like to emphasize that to prove  Theorem \ref{thm Liouville},  we don't need to assume the metric admits a global potential. Therefore, Theorem \ref{thm Liouville} can be  carried   over exactly and directly to the proof of section \ref{A new proof of the aprori version of  Cafferelli's  estimate} in the real case, without involving more issues. 
\end{rmk}

(3): The Chen-Donaldson-Sun's trick in \cite{CDS2}. This following version  is proved simply by using Lemma \ref{lem dbar solvability smooth case} to inequality (39) in \cite{CDS2} (with lower order term changed from $[\phi]_{\alpha}$ to  $|\phi|_{0}$).
\begin{prop}\label{prop CDS trick}For any constant coefficient K\"ahler metric $\omega_{c}$, there exist a small enough positive number $\delta$ and a big enough constant $C_{\omega_{c}}$, both depending on the positive lower and upper bounds on the eigenvalues of $\omega_{c}$, the dimension $n$, and $\alpha^{\prime}$, with the following properties. Suppose $\omega$ is a  K\"ahler-metric   over
$B_{0}(1)$ which admits a potential in $C^{2,\alpha^{\prime}}[B_{0}(1)]$. Suppose  
\begin{equation}
det\omega_{k\bar{l}}=e^{f},\  \frac{\omega_{c}}{1+\delta}\leq \omega\leq (1+\delta)\omega_{c}\ \textrm{over}\ B_{0}(1),
\end{equation}
then the following estimate holds in $B(\frac{1}{4})$. 
\[[\omega]_{\alpha^{\prime}, B(\frac{1}{4})}\leq 
C_{\omega_{c}}[|e^f|_{\alpha^{\prime},B(1)}+|\omega|_{0,B(1)}].\]
 
\end{prop}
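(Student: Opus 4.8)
The plan is to reduce the statement to the core a priori inequality established in \cite{CDS2} by combining it with the $\sqrt{-1}\partial\bar\partial$-solvability of Lemma \ref{lem dbar solvability smooth case}. First I would recall the set-up of the Chen–Donaldson–Sun argument: for a K\"ahler metric $\omega$ that is pinched between $\frac{\omega_c}{1+\delta}$ and $(1+\delta)\omega_c$ and satisfies $\det\omega_{k\bar l}=e^f$, one freezes coefficients at a point, writes $\omega=\omega_c+\sqrt{-1}\partial\bar\partial\psi$ for a local potential $\psi$, and applies an interior Schauder-type estimate to the linearized operator $\Delta_{\omega_c}$ acting on the difference of potentials; this is exactly inequality (39) in \cite{CDS2}, which bounds $[\omega]_{\alpha^{\prime},B(1/4)}$ by $C_{\omega_c}\big([|e^f|_{\alpha^{\prime},B(1)}+[\psi]_{\alpha,B(1)}\big)$, the lower-order term appearing there as the H\"older seminorm $[\psi]_\alpha$ (or $[\phi]_\alpha$) of the potential. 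The smallness of $\delta$ is used precisely so that the perturbation $\omega-\omega_c$ does not destroy the uniform ellipticity and the constant $C_{\omega_c}$ remains under control; I would keep that choice of $\delta$ unchanged.

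The only new point is to replace the potential-dependent lower-order term $[\psi]_\alpha$ by the geometrically meaningful quantity $|\omega|_{0,B(1)}$. For this I would invoke Lemma \ref{lem dbar solvability smooth case}: applying it with $\Lambda$ a fixed numerical constant (say $\Lambda=2$) and $r=1$ to the closed real $(1,1)$-form $\eta := \omega-\omega_c$, which is $C^\alpha$ with $|\eta|_{0,B(1)}\le |\omega|_{0,B(1)}+|\omega_c|_0$, produces a real-valued $\varphi\in C^{2,\alpha}(B(1/2))$ with $\sqrt{-1}\partial\bar\partial\varphi=\omega-\omega_c$ on $B(1/2)$ and $|\varphi|_{0,B(1/2)}\le C|\omega-\omega_c|_{0,B(1)}$, hence $|\varphi|_{0,B(1/2)}\le C(|\omega|_{0,B(1)}+|\omega_c|_0)$. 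Since $\omega_c$ is a constant-coefficient metric pinched with $\omega$, its norm is itself controlled by $|\omega|_{0,B(1)}$ up to a factor depending only on $\delta$, so in fact $|\varphi|_{0,B(1/2)}\le C_{\omega_c}|\omega|_{0,B(1)}$. Now $\varphi$ and $\psi$ differ by a pluriharmonic function on $B(1/2)$, but the estimate (39) in \cite{CDS2} is insensitive to adding a pluriharmonic (indeed harmonic for $\Delta_{\omega_c}$) function to the potential — one may simply run that argument with $\varphi$ in place of $\psi$, since both have the same $\sqrt{-1}\partial\bar\partial$. Feeding $|\varphi|_{0}$ into the inequality in place of $[\psi]_\alpha$ — which is legitimate because the interior estimate for $\Delta_{\omega_c}$ on $B(1/4)\subset B(1/2)$ bounds the $C^{2,\alpha^{\prime}}$-seminorm of $\varphi$ by the $C^{\alpha^{\prime}}$-norm of the right-hand side $\Delta_{\omega_c}\varphi$ (comparable to $[e^f]_{\alpha^{\prime}}$ plus lower order) plus the $C^0$-norm of $\varphi$ itself — yields
\[
[\omega]_{\alpha^{\prime},B(1/4)}\le C_{\omega_c}\big(|e^f|_{\alpha^{\prime},B(1)}+|\varphi|_{0,B(1/2)}\big)\le C_{\omega_c}\big(|e^f|_{\alpha^{\prime},B(1)}+|\omega|_{0,B(1)}\big),
\]
which is the claimed bound after relabeling $C_{\omega_c}$.

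The main obstacle, and the one point requiring care rather than routine bookkeeping, is the passage from the original lower-order term in \cite{CDS2} to $|\omega|_{0}$: one must make sure that the Schauder estimate underlying (39) genuinely tolerates replacing the potential by one differing from it by a $\Delta_{\omega_c}$-harmonic function, and that the solution $\varphi$ produced by Lemma \ref{lem dbar solvability smooth case} on $B(1/2)$ can be legitimately inserted into an estimate originally phrased on $B(1)$ — this is handled by first shrinking to $B(1/2)$, where $\varphi$ lives, and then running the frozen-coefficient interior estimate on the smaller pair $B(1/4)\subset B(1/2)$, at the cost of enlarging $C_{\omega_c}$ by a dimensional factor. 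Everything else — the triangle inequality for H\"older norms, absorbing $|\omega_c|_0$ into $|\omega|_0$ via the pinching, and tracking the dependence of constants on $(\text{eigenvalue bounds of }\omega_c,n,\alpha^{\prime})$ — is straightforward.
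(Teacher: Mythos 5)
Your proposal matches the paper's intended argument: the paper states that Proposition \ref{prop CDS trick} ``is proved simply by using Lemma \ref{lem dbar solvability smooth case} to inequality (39) in \cite{CDS2} (with lower order term changed from $[\phi]_{\alpha}$ to $|\phi|_{0}$),'' which is exactly your route of producing a potential $\varphi$ for $\omega-\omega_c$ via Lemma \ref{lem dbar solvability smooth case} with $|\varphi|_{0}\le C|\omega|_{0}$, observing that one may substitute $\varphi$ for the original potential since they differ by a pluriharmonic function, and then feeding $|\varphi|_{0}$ in place of the lower-order seminorm into the frozen-coefficient Schauder estimate behind (39). The ball-shrinking bookkeeping ($B(1/4)\subset B(1/2)\subset B(1)$) you flag is correctly handled and is the only cost of using the potential produced on the half-ball.
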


 Now we are ready to prove of Theorem \ref{thm of Yu Wang}.
\begin{proof}{of Theorem \ref{thm of Yu Wang}:} In this proof, while different "C" can be different constants,  the dependence of each $"C"$ is the same as the last sentence of  Theorem \ref{thm of Yu Wang}. We add more subletter to $C$ if it depends on more things.

 Notice that by  the Monge-Ampere equation (\ref{equ MA}), the $C^{0}-$norms of  $\sqrt{-1}\partial \bar{\partial}\phi$ and $f$ in Theorem \ref{thm of Yu Wang} determines a $K\geq 1$ such that 
\begin{equation}\label{equ quasi isometric constant K in new proof }
\frac{I}{K}\leq \sqrt{-1}\partial \bar{\partial}\phi\leq KI. 
\end{equation}

 Our proof can be divided into $3$ steps. 

Step 1: The notion of H\"older-radius and contradiction hypothesis. 

Denote $\omega_{Euc}$ as the Euclidean metric in the underline coordinates, and  $d_{q}=dist_{\omega_{Euc}}(q,\partial B(1))$.
\begin{Def}
H\"older Radius: given a closed $(1,1)-$form $\omega\in C^{\alpha^{\prime}}[\bar{B}(1)]$, for all $q\in B(1)$, we define $h_{\omega,q}$ as the supremum of the  radiuses $h\in (0,d_{q})$ with the following properties. \end{Def}
\begin{equation}\label{equ definition Holder radius}
 [\omega]_{\alpha,B_{q}(h)}=\Sigma_{k,l}[\omega_{k\bar{l}}]_{\alpha,B_{q}(h)}\leq \delta_{0}h^{-\alpha},
\end{equation}
where $\delta_{0}>0$ is  small enough with respect to the data in the last sentence of Theorem \ref{thm of Yu Wang}. Notice definition (\ref{equ definition Holder radius}) depends on the coordinates, thus when we rescale the coordinates, (\ref{equ MA equation holds in rescaled coordinates}) and (\ref{equ holder radius at the origin is 1 after rescaling}) hold. Since $\omega=\sqrt{-1}\partial \bar{\partial}\phi$  is assumed to be $C^{\alpha^{\prime}}$, and we are considering open balls,  then actually the supremum of radiuses can be attained. However, we don't need the H\"older radius to be attainable  in our proof.

 By definition, we obtain the following extremely simple but extremely important property of the H\"older radius. 
\begin{clm}\label{clm good property of the radius complex case}   For any $0<r<h_{q}$, we have 
$[\omega]_{\alpha,B_{q}(r)}\leq \delta_{0}r^{-\alpha}.$
\end{clm}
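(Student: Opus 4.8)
The plan is to derive this directly from the definition of the Hölder radius, using only the monotonicity of the seminorm $[\,\cdot\,]_{\alpha,\Omega}$ under shrinking the domain $\Omega$, together with the fact that the power function $h\mapsto h^{-\alpha}$ is monotone. First I would recall that for any $0<r<h<d_q$ one has the trivial domain-monotonicity
\[
[\omega]_{\alpha,B_q(r)}\;\le\;[\omega]_{\alpha,B_q(h)},
\]
since the Hölder seminorm is a supremum of difference quotients over a set of pairs of points, and $B_q(r)\subset B_q(h)$ only removes pairs from that set.

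Next, fix $r\in(0,h_q)$, where I write $h_q=h_{\omega,q}$ for brevity. By the definition of $h_{\omega,q}$ as a supremum, there exists an admissible radius $h\in(r,h_q]$ (or a sequence $h\uparrow h_q$, each admissible) satisfying the defining inequality $[\omega]_{\alpha,B_q(h)}\le\delta_0 h^{-\alpha}$. Combining this with the domain-monotonicity above gives
\[
[\omega]_{\alpha,B_q(r)}\;\le\;[\omega]_{\alpha,B_q(h)}\;\le\;\delta_0 h^{-\alpha}\;\le\;\delta_0 r^{-\alpha},
\]
where the last inequality uses $h\ge r$ and the fact that $t\mapsto t^{-\alpha}$ is decreasing for $\alpha>0$. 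This is exactly the asserted bound.

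There is essentially no obstacle here; the only small point to be careful about is the case where the supremum defining $h_{\omega,q}$ is not attained, which is handled by the sequence $h\uparrow h_q$ argument just indicated (and, as the authors already remark, attainability does in fact hold since $\omega\in C^{\alpha'}$ with $\alpha'>\alpha$, but we do not need it). I would keep the proof to these two or three lines.
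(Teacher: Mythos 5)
Your proof is correct, and it is precisely the elaboration of what the paper leaves implicit when it says the claim follows ``by definition'': the only ingredients are the domain-monotonicity of the H\"older seminorm $[\omega]_{\alpha,B_q(r)}\le[\omega]_{\alpha,B_q(h)}$ for $r<h$ and the fact that $t\mapsto t^{-\alpha}$ is decreasing, which together show the set of admissible radii is an initial segment of $(0,d_q)$ so every $r<h_{\omega,q}$ is itself admissible. Nothing more is needed, and your treatment of the possibly-unattained supremum by a sequence $h\uparrow h_q$ is the right way to keep the argument airtight without invoking attainability.
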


To prove Theorem \ref{thm of Yu Wang}, it suffices to show 
\begin{equation}\label{equ what we wanna show in contra positive proof}
\frac{h_{\omega,q}}{d_{q}}\geq c_{1}>0,\ \textrm{for some}\ c_{1}\ \textrm{depending only on}\ K\ \textrm{and}\ |f|_{\alpha^{\prime},B(1)}. 
\end{equation}

We  prove by contradiction. Were (\ref{equ what we wanna show in contra positive proof}) not true,  there exists a sequence of K\"ahler metrics
$\omega_{i}=\sqrt{-1}\partial \bar{\partial}\phi_{i}$, functions $F_{i}$, and points $q_{i}$ such that 
\begin{eqnarray}\label{eqn contradiction hypothesis}
&\bullet& det \omega_{i,k\bar{l}}=e^{F_{i}}\ \textrm{over}\ B(1);\nonumber
\\& \bullet& \frac{\omega_{Euc}}{K}\leq \omega_{i}\leq K\omega_{Euc},\ K\geq 1,\
|F_{i}|_{\alpha^{\prime}, B(1)}\leq c;\nonumber
\\&\bullet & \textrm{for any fixed}\ i, \frac{h_{\omega_{i},q_{i}}}{d_{q_{i}}}=\epsilon_{i}>0\ (\textrm{since we are doing aprori estimate});\nonumber
\\&\bullet& \frac{h_{{\omega}_{i}, q_{i}}}{d_{q_{i}}}=\epsilon_{i}\rightarrow 0, \     \frac{h_{\omega_{i}, q_{i}}}{d_{q_{i}}}\leq 2 \min \frac{h_{\omega_{i}, q}}{d_{q}},\ \textrm{for any}\ q\in B(1).
\end{eqnarray}

We shall derive a contradiction.

Step 2: Rescaling, norm bounds, and bootstrapping.

 We consider the rescaling
\begin{itemize}
\item $\widehat{z}_{1}=\frac{z_{1}-z_{1}(q_{i})}{h_{\omega_{i}, q_{i}}}$,..., $\widehat{z}_{n}=\frac{z_{n}-z_{n}(q_{i})}{h_{\omega_{i}, q_{i}}}$, denote the defined inverse map from  $B_{\widehat{0}}(\frac{1}{\epsilon_{i}})\subset\C^{n}$ to $B(1)$ as $\Gamma_{i}$;
\item  $\widehat{\omega}_{i}=\frac{1}{h^{2}_{\omega_{i},q_{i}}}\Gamma_{i}^{\star}\omega_{i}$,  $\widehat{F}_{i}=\Gamma_{i}^{\star}F_{i}$. 
\end{itemize}
Denote the Euclidean metric   with respect to the new coordinates $(\widehat{z}_{1},..., \widehat{z}_{n})$ as $\widehat{\omega}_{Euc}$. Thus,   in $B_{\widehat{0}}(\frac{1}{\epsilon_{i}})$ with respect to the new coordinates, then following holds.
\begin{equation}\label{equ MA equation holds in rescaled coordinates}
det \widehat{\omega}_{i,\widehat{k}\bar{\widehat{l}}}=e^{\widehat{F}_{i}}.
\end{equation}

Moreover, the H\"older radius of $\widehat{\omega}_{i}$ is 1 at the origin i.e
\begin{equation}\label{equ holder radius at the origin is 1 after rescaling}
h_{\widehat{\omega}_{i},\widehat{0}}=1.
\end{equation}
From now on, we add \  $\widehat{\cdot}$\  to those objects in the rescaled coordinates, so the reader can figure out that everything with\   $\widehat{\cdot}$\  is after rescaling. 

 For any $\infty>\lambda>0$, when $i$ is large enough, the metrics $\widehat{\omega}_{i}$  live  in $B_{\widehat{0}}(\lambda)$ in the rescaled coordinates.  For any $\widehat{p}$ in  $\C^{n}$, when $i$ is large enough such that $\widehat{p}\in B_{\widehat{0}}(\frac{1}{2\epsilon_{i}})$, consider the preimage of $\widehat{p}$ under the rescaling map as  \[p_{i}=\Gamma_{i}(\widehat{p})=\widehat{p}h_{\omega_{i}, q_{i}}+q_{i}\  \textrm{with respect to the coordinates}\  (z_{1},...,z_{n}).\] 

By the 4th item in (\ref{eqn contradiction hypothesis}),  we have $h_{\omega_{i}, p_{i}}\geq \frac{h_{\omega_{i}, q_{i}}d_{p_{i}}}{2 d_{q_{i}}}$.
Then after rescaling (with the factor $\frac{1}{h_{\omega_{i}, q_{i}}}$), we have 
\begin{equation}\label{equ holder radius big after rescaling}
h_{\widehat{\omega}_{i},\widehat{p}}\geq \frac{d_{p_{i}}}{2 d_{q_{i}}}.
\end{equation}

Notice  $\frac{d_{p_{i}}}{ d_{q_{i}}}$ is invariant under rescaling i.e
\begin{equation}\label{equ rescaling invariance of dist quotients}
\frac{d_{p_{i}}}{ d_{q_{i}}}=\frac{dist_{\widehat{\omega}_{Euc}}(\widehat{p},\partial \widetilde{B})}{dist_{\widehat{\omega}_{Euc}}(\widehat{0},\partial \widetilde{B})}=\frac{dist_{\widehat{\omega}_{Euc}}(\widehat{p},\partial \widetilde{B})}{\frac{1}{\epsilon_{i}}},
\end{equation}
where $\widetilde{B}$ is the image of $B(1)$ under the rescaling map.  Since  $$dist_{\widehat{\omega}_{Euc}}(\widehat{p},\widehat{0})<\infty,$$ then (\ref{eqn contradiction hypothesis}) and (\ref{equ rescaling invariance of dist quotients}) imply
\begin{equation}\label{equ dist quotient tends to 1}\lim_{i\rightarrow \infty}\frac{d_{p_{i}}}{ d_{q_{i}}}=1.\end{equation}
Therefore when $i$ is large, (\ref{equ holder radius big after rescaling}) and (\ref{equ dist quotient tends to 1}) imply $h_{\widehat{\omega}_{i},\widehat{p}}>\frac{1}{3}$. 

Hence, by Claim \ref{clm good property of the radius complex case}, we have 
\begin{equation}\label{equ good potential after rescaling}
  [\widehat{\omega}_{i}]_{\alpha,B_{\widehat{p}}(\frac{1}{3})}\leq 3^{\alpha}\delta_{0}.
\end{equation}
Choosing  $\omega_{c}=\widehat{\omega}_{i}(\widehat{p})$, and $\delta_{0}$ small enough with respect to $K$, (\ref{equ good potential after rescaling}) implies the small oscillation condition in Proposition \ref{prop CDS trick} is fulfilled in $B_{\widehat{p}}(\frac{1}{3})$. Then applying Proposition \ref{prop CDS trick} (rescaled to $B_{\widehat{p}}(\frac{1}{3})$), we end up with 
\begin{equation}\label{equ bootstrapping for the potential}
[\widehat{\omega}_{i}]_{\alpha^{\prime},B_{\widehat{p}}(\frac{1}{20})}\leq C.
\end{equation}
Then, (\ref{equ bootstrapping for the potential}) and the second item in (\ref{eqn contradiction hypothesis}) imply that for any $\lambda>0$, when $i$ is large enough such that $\frac{1}{\epsilon_{i}}>
1000(R +1)$,  the following crucial bootstrapping estimate holds:
\begin{equation}\label{equ crucial bootstrapping estimate}
|\widehat{\omega}_{i}|_{\alpha^{\prime},B_{\widehat{0}}(R)}\leq C.
\end{equation}

Step 3: Strong convergence of the rescaled sequence, rigidity of bubble, and contradiction. 

Then, by the Arela-Ascoli theorem, the sequence $\widehat{\omega}_{i}$ subconverges to an  $(\widehat{\omega}_{\infty}, \mathbb{C}^n)$ in $C^{\widehat{\alpha}}(B(\lambda))$-topology, for any $\lambda>0,\ \alpha<\widehat{\alpha}<\alpha^{\prime}$. In particular, we have 
 \begin{equation}\label{equ strong convergence of omega over compact sets}\lim_{i\rightarrow \infty}|\widehat{\omega}_{i}-\widehat{\omega}_{\infty}|_{\alpha, B_{\widehat{0}}(200)}=0.\end{equation}

 By the hypothesis that   $|F_{i}|_{\alpha^{\prime}, B(1)}\leq c$ in (\ref{eqn contradiction hypothesis}), and the hypothesis that $\frac{1}{h_{\omega_{i}, q_{i}}}\rightarrow \infty$, the pulled back functions $\widehat{F}_{i}$ subconverges to a constant $C_{1}$ in $C^{\widehat{\alpha}}[B(\lambda)]$-topology for any $\lambda>0$.  Then  the following holds on $\widehat{\omega}_{\infty}$. 
\begin{itemize}
\item As a form,  $\widehat{\omega}_{\infty}\in C^{\widehat{\alpha}}(\mathbb{C}^{n})$, for all $0<\widehat{\alpha}<\alpha^{\prime}$;

\item $det\widehat{\omega}_{\infty,\widehat{k}\bar{\widehat{l}}}=e^{C_1}$ over $\mathbb{C}^n$;

\item $\frac{\widehat{\omega}_{Euc}}{K}\leq \widehat{\omega}_{\infty}\leq K\widehat{\omega}_{Euc}$.
\item  $\widehat{\omega}_{\infty}$ admits potential over any finite ball, therefore $\widehat{\omega}_{\infty}$ is closed.  To see this, for any ball $\lambda>0$, applying (\ref{equ crucial bootstrapping estimate}) and  Lemma \ref{lem dbar solvability smooth case} to $B_{\widehat{0}}(100\lambda+100)$, we obtain  potentials $\widehat{\phi}_{i, \lambda}$ such that 
\begin{equation}
\widehat{\omega}_{i}=\sqrt{-1}\partial \bar{\partial}\widehat{\phi}_{i, \lambda},\   |\widehat{\phi}_{i,\lambda}|_{2,\alpha^{\prime},B_{\widehat{0}}(\lambda+1)}\leq C_{\lambda}\ \textrm{over}\ B(\lambda+1).
\end{equation}
Then, $\widehat{\phi}_{i,\lambda}$ subconverges (strongly) in $C^{2,\alpha}[B_{\widehat{0}}(\lambda)]-$topology to a potential $\widehat{\phi}_{\infty, \lambda}$ such that 
\begin{equation}\label{equ limit metric admits potential over finite balls}
\widehat{\omega}_{\infty}=\sqrt{-1}\partial \bar{\partial}\widehat{\phi}_{\infty, \lambda} \ \textrm{over}\ B_{\widehat{0}}(\lambda),\ 
|\widehat{\phi}_{\infty,\lambda}|_{2,\alpha,B_{\widehat{0}}(\lambda)}\leq C_{\lambda}.
\end{equation}

\end{itemize}

  Thus, the above $4$ items imply the conditions in Theorem \ref{thm Liouville} are fulfilled. According to Theorem \ref{thm Liouville}, $\widehat{\omega}_{\infty}$ is of constant coefficients, thus 
\begin{equation}\label{equ limit has no oscillation}[\widehat{\omega}_{\infty}]_{\alpha, B_{\widehat{0}}(200)}=0.
\end{equation}
Hence (\ref{equ strong convergence of omega over compact sets}) and (\ref{equ limit has no oscillation}) imply 
 \begin{equation}\label{equ limit of ossilation is  0}\lim_{i\rightarrow \infty}[\widehat{\omega}_{i}]_{\alpha, B_{\widehat{0}}(200)}=0.\end{equation}
Then when $i$  is large enough, we deduce
$$[\widehat{\omega}_{i}]_{\alpha, B_{\widehat{0}}(100)}\leq \frac{\delta_{0}}{100^{\alpha}}.$$
 This means \[h_{\widehat{\omega}_{i},\widehat{0}}\geq 100,\]
 which contradicts (\ref{equ holder radius at the origin is 1 after rescaling}) !
 
 The proof of Theorem \ref{thm of Yu Wang} is completed. 
 \end{proof}
\begin{rmk}Actually, in the item containing (\ref{equ limit metric admits potential over finite balls}) in Step 3 of the above proof,  to prove the existence of  potentials for $\widehat{\omega}_{\infty}$ over all finite balls , it is easier to prove first by definition that $\omega$ is a closed current, and then apply Lemma \ref{lem dbar solvability smooth case}. However, since we want to carry our proof  in this section exactly and directly to section \ref{A new proof of the aprori version of  Cafferelli's  estimate} without involving more issues,   we still want to take  $\widehat{\phi}_{\infty, \lambda}$ as the  limit of the potentials of $\widehat{\omega}_{i}$.  
\end{rmk}

\section{Appendix:  A new proof of the aprori version of  Cafferelli's  estimate for real Monge-Ampere equations. \label{A new proof of the aprori version of  Cafferelli's  estimate}}
The proof of Theorem \ref{thm of Caffarelli} is exactly parallel to the proof in section \ref{A new proof of the aprori version of Yu-Wang's Cafferelli type estimate}. Namely, to translate the "complex" proof in section \ref{A new proof of the aprori version of Yu-Wang's Cafferelli type estimate} to the real case of Theorem \ref{thm of Caffarelli}, we only need to 
\begin{itemize}
\item  change the $"\sqrt{-1}\partial\bar{\partial}"$ in section \ref{A new proof of the aprori version of Yu-Wang's Cafferelli type estimate} to $"\nabla^{2}"$ (Hessian);
\item change the $\omega_{k\bar{l}}$ to $g_{kl}$, $\phi_{k\bar{l}}$ to $u_{kl}$;
\item change the complex coordinates $"z_{1}...z_{n}"$ in  section \ref{A new proof of the aprori version of Yu-Wang's Cafferelli type estimate} to real coordinates $"x_{1}...x_{n}"$;
\item change the words "plurisubharmonic" to "convex";
\item change the equation from (\ref{equ MA}) to (\ref{equ real MA}). 

\end{itemize} 

By translating as above, Lemma \ref{lem dbar solvability smooth case} corresponds to Lemma \ref{lem hessian solvability smooth case}, Theorem \ref{thm Liouville} corresponds to Theorem \ref{thm Calabi Liouville}, Proposition \ref{prop CDS trick} corresponds to Proposition \ref{prop CDS trick real case}. One thing worth mentioning is, while the proof of Lemma \ref{lem dbar solvability smooth case} requires Griffith-Harris' trick \cite{GH} and Hormander's results \cite{Hormander}, Lemma \ref{lem hessian solvability smooth case} can be proved in one line. 

 \begin{lem}\label{lem hessian solvability smooth case}Suppose $\frac
{1}{\Lambda}<r<\Lambda$ for some $\Lambda>0$, then there exists a constant $C_{\Lambda}$ depending on $\Lambda$ and $n$ with the following 
properties. 

  Suppose $g$ is a matrix-valued function over $B_{r}$, such that $g=\nabla^{2}u$ for some  function $u\in C^{2,\alpha}(B_{r})$. Then there exists a function $v\in C^{2,\alpha}[B(\frac{r}{2})]$ such that $g=\nabla^{2}v$ and 
 $$|v|_{0, B_{\frac{r}{2}}}\leq Cr^2|g|_{0,B_r},$$
where $C$ is constant depending on $n$. Consequently, 
\[|v|_{2,\alpha,B_{\frac{r}{4}}}\leq C_{\Lambda}|g|_{\alpha,B_{r}}.\]
 \end{lem}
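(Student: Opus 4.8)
The plan is to take $v$ to be $u$ itself, corrected only by an affine function chosen to kill the value and gradient at the center of the ball; since affine functions have vanishing Hessian this does not disturb the equation $\nabla^2 v = g$, and it is precisely what makes the $C^0$-bound cheap. Concretely, let $x_0$ be the center of $B_r$ and set
\[
v(x) = u(x) - u(x_0) - \nabla u(x_0)\cdot(x-x_0).
\]
Then $\nabla^2 v = \nabla^2 u = g$ on $B_r$, while $v(x_0)=0$ and $\nabla v(x_0)=0$. By Taylor's formula with integral remainder,
\[
v(x) = \int_0^1 (1-t)\,\nabla^2 u\big(x_0 + t(x-x_0)\big)\big(x-x_0,\,x-x_0\big)\,dt,
\]
so $|v(x)| \le \tfrac12 |x-x_0|^2\,|g|_{0,B_r}$ for every $x\in B_r$. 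Restricting to $B(r/2)$ yields $|v|_{0,B_{r/2}} \le \tfrac18 r^2 |g|_{0,B_r}$, which is the first assertion (any $C\ge\tfrac18$ works, so the stated dependence on $n$ is harmless).

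For the ``consequently'' part I would just read off the remaining norms. On $B(r/2)$ the second derivatives of $v$ are literally $g$, hence $[\nabla^2 v]_{\alpha,B_{r/2}} = [g]_{\alpha,B_{r/2}} \le [g]_{\alpha,B_r}$ and $|\nabla^2 v|_{0,B_{r/2}} \le |g|_{0,B_r}$; moreover, since $\nabla v(x_0)=0$, integrating $\nabla^2 v$ along segments emanating from $x_0$ gives $|\nabla v|_{0,B_{r/2}} \le \tfrac{r}{2}|g|_{0,B_r}$. Summing these with the $C^0$-bound above and using $\tfrac1\Lambda < r < \Lambda$ to absorb the powers $r^2,\ r,\ 1,\ r^\alpha$ into a constant $C_\Lambda$, one gets $|v|_{2,\alpha,B_{r/2}} \le C_\Lambda |g|_{\alpha,B_r}$, and a fortiori on the smaller ball $B(r/4)$.

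There is essentially no obstacle here — this is the ``one line'' alluded to just before the statement — and the only point worth flagging is why no elliptic solvability theory is needed (in contrast with the $\sqrt{-1}\partial\bar\partial$-equation of Lemma \ref{lem dbar solvability smooth case}, whose proof invokes Hörmander-type estimates): the compatibility condition that a matrix field must satisfy in order to be a Hessian is automatic here, because we are handed $g=\nabla^2 u$ from the outset, so a primitive exists for free and the entire content of the lemma is the elementary Taylor bound above.
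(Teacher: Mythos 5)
Your proposal is correct and is essentially the same as the paper's proof: both take $v$ to be $u$ minus its first-order Taylor polynomial at the center, observe that this leaves the Hessian unchanged while forcing $v$ and $\nabla v$ to vanish there, and then bound $|v|_0$ by integrating twice (the paper phrases this as applying the mean value theorem to $\nabla v$ and then to $v$; you use Taylor's formula with integral remainder, which is the same estimate). The closing remarks on why no elliptic solvability is needed are accurate and consistent with the paper's comment contrasting this lemma with the $\sqrt{-1}\partial\bar\partial$-version.
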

\begin{proof}{of Lemma \ref{lem hessian solvability smooth case}:} The proof can not be easier. Just take $v$ as $u$ minus its linearization i.e 
\begin{equation}
v=u-u(0)-x\cdot \nabla u(0),
\end{equation}
then 
\[\nabla^{2}v=g,\ v(0)=0,\ (\nabla v)(0)=0.\]
Thus the estimate of $|v|_{L^{\infty}[B(\frac{r}{2})]}$ follows by applying the mean value theorem to $\nabla v$ and then to $v$.
\end{proof}

\begin{thm}\label{thm Calabi Liouville}(Calabi \cite{Calabi58}) (Pogorelov \cite{Pogorelov}) Suppose $g$ is a symmetric-matrix-valued function  defined over $\R^{n}$. Suppose $g$ admits a $C^{2,\alpha}-$potential over any finite ball i.e for any ball $B\in \R^{n}$, there exists a function $u_{B}\in C^{2,\alpha}(B)$ such that 
$$g=\nabla^{2}u_{B}\ \textrm{over}\ B.$$ 
Suppose there is a constant  $K$ such that  
\begin{equation}
det g_{kl}=1,\ \frac{1}{K}I\leq g_{kl}\leq KI\ \textrm{over}\ \R^{n}.
\end{equation}
Then, for any $1\leq k,l\leq n$, $g_{kl}$ is a constant. 
\end{thm}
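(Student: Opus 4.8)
\emph{Proof proposal.} The plan is to deduce the statement from Calabi's interior third–derivative estimate, in the self‑contained form that the pinching $\frac1K I\le g\le KI$ makes available: this hypothesis says precisely that the Hessian metric $g=\nabla^2 u$ is globally quasi‑isometric to the Euclidean metric, so $(\R^n,g)$ is complete and all the cut‑off estimates below are elementary, with no appeal to volume or curvature comparison. First I would upgrade regularity: $\det\nabla^2 u_B=1$ is uniformly elliptic on each ball $B$, so differentiating it once gives, for each $\partial_e u_B$, a linear uniformly elliptic equation $g^{ij}\partial_i\partial_j(\partial_e u_B)=0$ with coefficients $g^{ij}=(\nabla^2 u_B)^{-1}\in C^\alpha$, and the standard linear Schauder bootstrap (the right–hand side being the constant $0$, hence smooth) makes each local potential, and therefore $g$, of class $C^\infty$. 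From now on write $g_{ij}=\partial_i\partial_j u$ for a local potential $u$; the flat third derivative $A_{ijk}:=\partial_i\partial_j\partial_k u=\partial_k g_{ij}$ and the scalar $\Phi:=g^{ip}g^{jq}g^{kr}A_{ijk}A_{pqr}=|\nabla^3u|_g^2$ are then globally well defined on $\R^n$.

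The heart of the proof is a Bochner/Weitzenb\"ock inequality for $\Phi$. Differentiating $\log\det g=0$ once gives $g^{ij}A_{kij}=0$ (the cubic form $A$ is $g$–trace‑free), and differentiating twice expresses the $g$–trace of $\partial A$ as a universal quadratic polynomial in $A$; since the Christoffel symbols are $\Gamma^k_{ij}=\tfrac12 g^{kl}A_{lij}$, the curvature of $g$ is also quadratic in $A$ modulo $\nabla_g A$. Feeding these identities into $\Delta_g\Phi=2\langle\Delta_g A,A\rangle_g+2|\nabla_g A|_g^2$ and using that the failure of $\nabla_g A$ to be totally symmetric is itself quadratic in $A$ (the affine Codazzi structure), Calabi's computation produces a dimensional constant $c(n)>0$ with
\[
\Delta_g\Phi\ \ge\ 2|\nabla_g A|_g^2+c(n)\,\Phi^2\ \ge\ c(n)\,\Phi^2\qquad\text{on }\R^n,
\]
the point being that the quartic‑in‑$A$ contribution from $\langle\Delta_g A,A\rangle_g$ has a favorable sign — this is exactly where $\det\nabla^2 u=1$ (through trace‑freeness of $A$ and $n\ge 2$) is used. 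I expect this computation to be the main obstacle; the rest is soft.

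Given the inequality, I would run a maximum principle on large balls. Fix $R>0$ and set $\eta(x)=\zeta(|x|/R)$ with $\zeta$ a fixed cut‑off, $\zeta\equiv1$ on $[0,\tfrac12]$ and $\operatorname{supp}\zeta\subset[0,1]$. Because $\tfrac1K I\le g\le KI$, on $\operatorname{supp}\nabla\eta$ one has $|\nabla_g\eta|_g^2\le C(n,K)R^{-2}$ and $|\Delta_g\eta|\le C(n,K)R^{-2}$ by a direct computation. At an interior maximum $x_0$ of $\eta^2\Phi$ one has $\nabla(\eta^2\Phi)=0$ and $\Delta_g(\eta^2\Phi)\le 0$; expanding $\Delta_g(\eta^2\Phi)$, substituting for $\nabla\Phi$ from the first condition, and inserting the displayed inequality yields $c(n)\,\eta(x_0)^2\Phi(x_0)^2\le C(n,K)R^{-2}\Phi(x_0)$, hence $\sup_{B_{R/2}}\Phi\le(\eta^2\Phi)(x_0)\le C(n,K)R^{-2}$. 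Letting $R\to\infty$ gives $\Phi\equiv 0$, i.e. $\nabla^3 u\equiv 0$, so every local potential is a quadratic polynomial and $g=\nabla^2 u$ is a constant matrix — which trivially satisfies $\det g=1$ and $\tfrac1K I\le g\le KI$. (Equivalently one may simply invoke the J\"orgens–Calabi–Pogorelov theorem that a global convex solution of $\det\nabla^2 u=1$ on $\R^n$ is quadratic, the pinching forcing $u$ to be such a solution; the argument above is just its pinched, self‑contained incarnation, and it transfers verbatim to the complex Theorem \ref{thm Liouville} with $g=\nabla^2u$ replaced by the K\"ahler metric $\omega_{k\bar l}=\phi_{k\bar l}$.)
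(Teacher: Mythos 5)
The paper does not prove Theorem~\ref{thm Calabi Liouville} at all; it is stated as a citation to Calabi \cite{Calabi58} and Pogorelov \cite{Pogorelov}, with a remark explaining that the hypothesis ``potential on every finite ball'' suffices (local potentials on nested balls differ by affine functions and can be patched to a global one). Your proposal is therefore supplying a proof where the paper supplies a reference, and what you write is, in outline, precisely Calabi's original 1958 argument, specialized to the situation where the pinching $\frac1K I\le g\le KI$ is assumed. The pinching is what makes the whole thing clean: it makes the Hessian metric uniformly equivalent to the Euclidean one, so completeness is automatic and the cut-off estimates are elementary, which is exactly how one removes the $n\le 5$ restriction that appears in Calabi's unpinched Liouville theorem. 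That is a genuine and useful observation, and the maximum-principle step is correctly structured.

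Two points deserve flagging. First, the entire proof rests on Calabi's Bochner-type inequality $\Delta_g\Phi\ge 2|\nabla_g A|_g^2+c(n)\Phi^2$, which you assert and do not derive; you are honest about this (``I expect this computation to be the main obstacle''), but as written the proposal is an outline with its central lemma black-boxed, so it is a reduction to Calabi's Lemma rather than a self-contained proof. Second, your cut-off estimate $|\Delta_g\eta|\le C(n,K)R^{-2}$ is correct but uses a fact you do not state: since $\det g\equiv 1$, $\Delta_g\eta=\partial_i(g^{ij}\partial_j\eta)=g^{ij}\partial_i\partial_j\eta$ because the cofactor matrix of a Hessian is divergence-free, so the potentially troublesome first-order term $(\partial_i g^{ij})\partial_j\eta$ (which would contribute a factor of $\Phi^{1/2}/R$) vanishes identically. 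Without invoking this identity, one would instead have to absorb that extra term by Young's inequality, which still works but should be said. Finally, the closing parenthetical claim that the argument ``transfers verbatim'' to the complex Theorem~\ref{thm Liouville} is overstated: the complex analogue of Calabi's identity for $S=g^{i\bar r}g^{s\bar j}g^{k\bar t}\phi_{i\bar j k}\phi_{\bar r s\bar t}$ (Yau's and Calabi's $C^3$ computation) is structurally parallel but is a different calculation with a different curvature bookkeeping, and the paper's cited reference Riebesehl--Schulz \cite{RS} is the appropriate authority there.
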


\begin{rmk} Actually Calabi's and Pogorelov's original theorems in \cite{Calabi58} and \cite{Pogorelov} are much stronger than Theorem \ref{thm Calabi Liouville}, but all we need here is Theorem \ref{thm Calabi Liouville}. In \cite{Calabi58},  $g$ is assumed to admit a global potential. Though in our new proof of Theorem \ref{thm of Caffarelli} of Caffarelli, we have a global potential, we still want  to state the Liouville theorem  as Theorem \ref{thm Liouville} to emphasize that it  does not need a global potential. 
\end{rmk}

\begin{prop}\label{prop CDS trick real case} For any constant coefficient Riemannian metric $g_{c}$, there exist a small enough positive number $\delta$ and a big enough constant $C_{g_{c}}$, both depending on the positive lower and upper bounds on the eigenvalues of $g_{c}$, the dimension $n$, and $\alpha^{\prime}$, with the following properties. Suppose $u$ is a $C^{2,\alpha^{\prime}}$ convex function  defined over
$B_{0}(1)$  such that  
\begin{equation}
det u_{ij}=e^{f},\  \frac{g_{c}}{1+\delta}\leq \nabla^{2}u\leq (1+\delta)g_{c}\ \textrm{over}\ B_{0}(1),
\end{equation}
then the following estimate holds in $B(\frac{1}{4})$. 
\[[\nabla^{2}u]_{\alpha^{\prime}, B(\frac{1}{4})}\leq 
C_{g_c}(|e^f|_{\alpha^{\prime},B(1)}+|\nabla^{2} u|_{0,B(1)}).\]
\end{prop}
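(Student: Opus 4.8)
The plan is to read Proposition \ref{prop CDS trick real case} as a perturbation Schauder estimate: once $\nabla^{2}u$ is pinched near the constant matrix $g_{c}$, the Monge-Amp\`ere operator is a small perturbation of the fixed constant-coefficient elliptic operator $L_{0}=\mathrm{cof}(g_{c})^{ij}\partial_{i}\partial_{j}$, and one runs the classical Caffarelli-type dyadic iteration for such operators \cite{Caff}. First I would normalize. Subtracting an affine function changes neither $\nabla^{2}u$ nor the equation, so by Lemma \ref{lem hessian solvability smooth case} (replace $u$ by $u-u(0)-x\cdot\nabla u(0)$) one may assume $u(0)=0$ and $\nabla u(0)=0$, whence $|u|_{0,B(1)}\le C\,|\nabla^{2}u|_{0,B(1)}$ by the mean value theorem. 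This is the only use of Lemma \ref{lem hessian solvability smooth case}, and it is exactly what lets the lower-order term of the final estimate be $|\nabla^{2}u|_{0,B(1)}$ rather than a potential-dependent quantity; in the complex proof of Proposition \ref{prop CDS trick}, where potentials are not canonical, this is the role played by Lemma \ref{lem dbar solvability smooth case}.

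The core is a dyadic iteration. Fix $x_{0}\in B(\frac{1}{4})$ and work on balls $B_{r}(x_{0})\subset B(1)$. Given a quadratic $P_{r}$ with $\nabla^{2}P_{r}$ pinched near $g_{c}$ and $|u-P_{r}|_{0,B_{r}(x_{0})}$ suitably small, let $h_{r}$ solve the \emph{frozen} constant--right-hand-side equation $\det\nabla^{2}h_{r}=e^{f(x_{0})}$ on $B_{r}(x_{0})$ with $h_{r}=u$ on $\partial B_{r}(x_{0})$. Two standard facts combine. (i) The linearization $M_{r}^{ij}(u-h_{r})_{ij}=e^{f}-e^{f(x_{0})}$, where $M_{r}=\int_{0}^{1}\mathrm{cof}\bigl(t\nabla^{2}u+(1-t)\nabla^{2}h_{r}\bigr)\,dt$, is uniformly elliptic --- both Hessians are pinched near $g_{c}$, the pinching of $\nabla^{2}h_{r}$ following from Caffarelli's interior estimates \cite{Caff} applied to $h_{r}$, which agrees on $\partial B_{r}(x_{0})$ with the pinched quadratic $P_{r}$ --- so the ABP inequality gives $|u-h_{r}|_{0,B_{r}(x_{0})}\le C\,r\,\|e^{f}-e^{f(x_{0})}\|_{L^{n}(B_{r}(x_{0}))}\le C\,r^{2+\alpha'}[e^{f}]_{\alpha',B(1)}$. (ii) A solution of the constant--right-hand-side equation whose Hessian is pinched near $g_{c}$ is smooth inside, with bounds depending only on $n$ and the eigenvalue bounds of $g_{c}$; hence its second-order Taylor polynomial $P_{\theta r}$ at $x_{0}$ satisfies $|h_{r}-P_{\theta r}|_{0,B_{\theta r}(x_{0})}\le C\,\theta^{3}$ (the natural scale-$r$ quantity), and since $3>2+\alpha'$ (using $\alpha'<1$) this is a contractive gain at the $C^{2,\alpha'}$ level. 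Combining (i) and (ii), the quadratic-approximation error of $u$ improves from scale $r$ to scale $\theta r$ by the factor $\theta^{2+\alpha'}$, plus a principal-part error controlled by the pinching $\delta$, plus a summable inhomogeneous term $C\,r^{2+\alpha'}[e^{f}]_{\alpha',B(1)}$. Fixing $\theta$ small, then $\delta$ small enough to absorb the principal-part error, and iterating over $r=\theta^{k}$, the geometric series yields $u\in C^{2,\alpha'}$ at $x_{0}$ with seminorm $\le C_{g_{c}}\bigl(|e^{f}|_{\alpha',B(1)}+|\nabla^{2}u|_{0,B(1)}\bigr)$, the base case supplying the normalized bound $|u|_{0}\le C|\nabla^{2}u|_{0}$; ranging $x_{0}$ over $B(\frac{1}{4})$ gives the proposition. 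If one wants the Chen-Donaldson-Sun difference trick \cite{CDS2} literally, run the identical iteration on $w_{\tau}(x)=u(x+\tau)-u(x)$, which solves $a_{\tau}^{ij}\partial_{i}\partial_{j}w_{\tau}=e^{f(\cdot+\tau)}-e^{f}$ with $a_{\tau}=\int_{0}^{1}\mathrm{cof}\bigl(t\nabla^{2}u(\cdot+\tau)+(1-t)\nabla^{2}u\bigr)\,dt$ uniformly elliptic and $C_{g_{c}}\delta$-close to $\mathrm{cof}(g_{c})$; the quadratic-approximation formulation already contains this.

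The main obstacle is the familiar circularity of Schauder estimates: the coefficients of the linearized operator --- the cofactor matrix of $\nabla^{2}u$ --- have $C^{\alpha'}$-seminorm comparable to $[\nabla^{2}u]_{\alpha',B(1)}$ itself, while the hypothesis makes them only \emph{bounded with small oscillation}, not H\"older-small. The smallness of $\delta$ breaks the circle: the principal-part error $|a_{\tau}-\mathrm{cof}(g_{c})|_{0}\le C_{g_{c}}\delta$ is absorbable at \emph{every} dyadic scale, so the iteration closes using only the constant-coefficient Schauder constant for $L_{0}$ together with $\delta$ small; after rescaling a radius-$r$ ball to unit size the coefficient H\"older seminorm appears only as $\le C\,r^{\alpha'}[\nabla^{2}u]_{\alpha'}$, which is harmless for $r$ small, so it is never used at full strength. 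The remaining work is routine but must be done carefully: making the constant-coefficient estimate scale-invariant, verifying that the inhomogeneous error is genuinely summable (which is why the hypothesis requires $e^{f}\in C^{\alpha'}$, not merely $C^{0}$), and confirming that $h_{r}$ really does have near-constant Hessian at every stage of the loop.
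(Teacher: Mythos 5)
Your proposal takes a genuinely different route from the paper. The paper never gives a self-contained argument for Proposition \ref{prop CDS trick real case}: it declares the complex version, Proposition \ref{prop CDS trick}, to be ``proved simply by using Lemma \ref{lem dbar solvability smooth case} to inequality (39) in \cite{CDS2},'' and states that the real case is obtained by word-for-word translation. The mechanism behind (39) of \cite{CDS2} is a finite-difference Schauder estimate: one sets $w_\tau=u(\cdot+\tau)-u$, notes $a_\tau^{ij}\partial_i\partial_j w_\tau=e^{f(\cdot+\tau)}-e^f$ with $a_\tau=\int_0^1\mathrm{cof}\bigl(t\nabla^2u(\cdot+\tau)+(1-t)\nabla^2u\bigr)\,dt$ uniformly elliptic with small oscillation, applies linear Schauder theory for such operators, and sends $\tau\to 0$; Lemma \ref{lem hessian solvability smooth case} is then used to convert the lower-order term into $|\nabla^2 u|_0$. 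You instead run the Caffarelli frozen-coefficient dyadic iteration directly, using ABP to convert the $C^{\alpha'}$-oscillation of the right-hand side into an $O(r^{2+\alpha'})$ error and the interior regularity of the frozen constant-RHS problem to gain per dyadic step. Both routes are standard for perturbative $C^{2,\alpha}$ estimates; yours is self-contained but requires existence and interior $C^3$ regularity for the frozen Dirichlet problem at each scale, while the paper's is shorter but outsources the heart of the matter to \cite{CDS2}. You are right that the role of Lemma \ref{lem hessian solvability smooth case} is only to normalize the lower-order term, exactly as the paper indicates.

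There are, however, two soft spots in your iteration that need to be repaired. First, in step (ii) you obtain $|h_{r}-P_{\theta r}|_{0,B_{\theta r}(x_{0})}\le C\theta^{3}$ from the \emph{absolute} interior $C^{3}$ bound on $h_r$, which after unwinding the scaling reads $|\nabla^3 h_r|\lesssim 1/r$ on $B_{r/2}$, hence $|h_r-P_{\theta r}|_{0,B_{\theta r}}\lesssim\theta^{3}r^{2}$; but $\theta^{3}r^{2}$ does \emph{not} dominate $(\theta r)^{2+\alpha'}$ uniformly in $r$, so as literally written the iteration fails to close. What closes it is applying the rescaled $C^{3}$ estimate to the small function $h_r-P_r$: by the inductive hypothesis and the maximum principle for the linearized (near-constant-coefficient, constant-RHS) equation it solves, $|h_r-P_r|_{0,B_r}\lesssim A(r)\,r^{2+\alpha'}$, so $|\nabla^3(h_r-P_r)|\lesssim A(r)\,r^{\alpha'-1}$ and $|h_r-P_{\theta r}|_{0,B_{\theta r}}\lesssim\theta^{3}A(r)\,r^{2+\alpha'}$, yielding the contraction $A(\theta r)\le C\theta^{1-\alpha'}A(r)+C\theta^{-(2+\alpha')}[e^f]_{\alpha'}$. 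Your parenthetical ``the natural scale-$r$ quantity'' and the observation ``$3>2+\alpha'$'' gesture at this, but the inductive hypothesis must enter explicitly or the displayed inequality is false for small $r$. Second, $h_r$ agrees on $\partial B_r(x_0)$ with $u$, not with $P_r$; it is only $O(r^{2+\alpha'})$-close to $P_r$ there, and the pinching of $\nabla^2 h_r$ therefore requires a short stability argument (Pogorelov estimate plus $C^0$-closeness of $h_r$ to a pinched paraboloid), not just a citation of \cite{Caff} --- particularly since \cite{Caff} is the very theorem the present paper is reproving.
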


With the above discussion in section \ref{A new proof of the aprori version of  Cafferelli's  estimate}, the proof of Theorem \ref{thm of Caffarelli} is complete.

Xiuxiong Chen, Department of Mathematics, Stony Brook University,
NY, USA;\ \ xiu@math.sunysb.edu.\\

Yuanqi Wang, Department of Mathematics, University of California  at Santa Barbara, Santa Barbara,
CA,  USA;\ \ wangyuanqi@math.ucsb.edu.

  \end{document}